\theoremstyle{definition}
\newtheorem{theorem}{Theorem}[]
\newtheorem{corollary}[theorem]{Corollary}
\newtheorem{lemma}[theorem]{Lemma}
\newtheorem{claim}[]{Claim}
\newtheorem{conjecture}[theorem]{Conjecture}
\newtheorem{observation}[theorem]{Observation}
\newtheorem{case}[]{Case}
\newtheorem{subcase}{Subcase}[case]
\numberwithin{equation}{section}
\title{Odd coloring of $k$-trees}
\author{
  Masaki Kashima\thanks{Faculty of Science and Technology, Keio University, Yokohama, Japan. email: masaki.kashima10@gmail.com}\qquad
  Kenta Ozeki\thanks{Faculty of Environment and Information Sciences, Yokohama National University, Yokohama, Japan. email: ozeki-kenta-xr@ynu.ac.jp}
}
\begin{document}
\maketitle

\begin{abstract}
  An odd coloring of a graph is a proper coloring such that every non-isolated vertex has a color that appears at an odd number of its neighbors. 
  This notion was introduced by Petr\v{s}evski and \v{S}krekovski in 2022.
  In this paper, we focus on odd coloring of $k$-trees, where a $k$-tree is a graph obtained from the complete graph of order $k+1$ by recursively adding a new vertex that is joined to a clique of order $k$ in the former graph.
  It follows from a result of Cranston, Lafferty, and Song in 2023 that every $k$-tree is odd $(2k+1)$-colorable.
  We improve this bound to show that every $k$-tree is odd $\left(k+2\left\lfloor\log_2 k\right\rfloor+3\right)$-colorable.
  Furthermore, when $k=2,3$, we show the tight bound that every 2-tree is odd $4$-colorable and that every 3-tree is odd $5$-colorable.\\
  \textbf{Keywords:} proper coloring, odd coloring, $k$-tree, unavoidable set
\end{abstract}

\section{Introduction}\label{section:intro}

Throughout the paper, we only consider simple, finite, and undirected graphs.
For integers $k$ and $\ell$ with $k\leq \ell$, let $[k,\ell]$ denote the set of integers at least $k$ and at most $\ell$.
In particular, for a positive integer $k$, let $[k]$ denote $\{1,2,\dots , k\}$.

A $k$-coloring of a graph $G$ is a map $\varphi$ from the vertex set $V(G)$ to the set $[k]$ such that $\varphi(u)\neq \varphi(v)$ for every edge $uv$ of $G$.
For a $k$-coloring $\varphi$ of $G$, we say that a vertex $v$ of $G$ \emph{satisfies the odd condition with respect to $\varphi$} if $|\varphi^{-1}(i)\cap N_G(v)|$ is odd for some color $i\in [k]$.
An \emph{odd $k$-coloring} of $G$ is a $k$-coloring $\varphi$ of $G$ such that every non-isolated vertex of $G$ satisfies the odd condition with respect to $\varphi$.
This notion was recently introduced by Petru\v{s}evski and \v{S}krekovski~\cite{PS2022}, and has been studied in many publications \cite{CPS2022, CCKP2023, C2024, CLS2023, DOP2024, KMOOT2024+, KZ2024, MSTY2024, PP2023, PS2022, TY2023, WY2024}.
This paper focuses on odd coloring of $k$-trees.

For a positive integer $k$, a graph $G$ is a \emph{$k$-tree} if there is a sequence $G_0, G_1, \dots , G_s$ of induced subgraphs of $G$ such that 
\begin{itemize}
  \item $G_0$ is isomorphic to $K_{k+1}$, $G_s=G$, and
  \item for every $i\in [s]$, there is a vertex $v_{i+k+1}\in V(G_i)\setminus V(G_{i-1})$ such that $V(G_i)=V(G_{i-1})\cup \{v_{i+k+1}\}$ and $N_G(v_{i+k+1})\cap V(G_{i-1})$ is a clique of order $k$ of $G$.
\end{itemize}
By setting $V(G_0)=\{v_1,v_2,\dots , v_{k+1}\}$, we obtain an ordering $v_1,v_2,\dots , v_n$ of the vertex set of $G$ where $n=s+k+1$.
We call this ordering an \emph{addition ordering} of a $k$-tree $G$.
Note that a sequence of subgraphs with the above properties is not unique, and thus an addition ordering of a given $k$-tree is not unique.
For every addition ordering of a $k$-tree $G$, $N_G(v_i)\cap \{v_j\mid 1\leq j\leq i-1\}$ is a clique of order $k$ for every $i\in [s]$, and in particular, $G$ is $k$-degenerate.
Thus, every $k$-tree is $(k+1)$-colorable, and furthermore, every $k$-tree is uniquely $(k+1)$-colorable.
On the other hand, the bound does not hold for odd coloring of $k$-trees.
Indeed, let $G$ be a $k$-tree such that $V(G)=\{v_i\mid 1\leq i\leq k\}\cup \{u_j\mid 0\leq j\leq k\}$, $\{v_i\mid 1\leq i\leq k\}\cup \{u_0\}$ is a clique of $G$, and $N_G(u_j)=\{v_i\mid 1\leq i\leq k, i\neq j\}\cup \{u_0\}$ for each $j\in [k]$. 
Let $\varphi:V(G)\to [k+1]$ be a proper $(k+1)$-coloring of $G$.
Since every $k$-tree is uniquely $(k+1)$-colorable, without loss of generality, we may assume that $\varphi(v_i)=\varphi(u_i)=i$ for each $i\in [k]$ and $\varphi(u_0)=k+1$.
Then $\varphi^{-1}(i)\cap N_G(u_0)=\{v_i,u_i\}$ for every $i\in [k]$ and thus $u_0$ does not satisfy the odd condition with respect to $\varphi$.
Hence, $G$ is not odd $(k+1)$-colorable.

A natural question is whether the odd chromatic number of a $k$-tree is bounded by a function on $k$.
The answer is yes, according to the following theorem by Cranston, Lafferty, and Song~\cite{CLS2023}.
A class $\mathcal{F}$ of graphs is said to be \emph{minor-closed} if for every graph $G\in \mathcal{F}$ and for every minor $H$ of $G$, $H$ is also contained in $\mathcal{F}$.

\begin{theorem}[\cite{CLS2023}]\label{thm:minor closed}
  Let $\mathcal{F}$ be a minor-closed family of graphs such that every graph in $\mathcal{F}$ is $d$-degenerate.
  Then, every graph in $\mathcal{F}$ is odd $(2d+1)$-colorable.
\end{theorem}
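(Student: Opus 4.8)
The plan is to argue by contradiction. Suppose the statement fails, and let $G\in\mathcal{F}$ be a counterexample with the fewest vertices; since deleting isolated vertices changes neither membership in $\mathcal{F}$ nor odd $(2d+1)$-colorability, I may assume $G$ has minimum degree at least $1$. Because every graph in $\mathcal{F}$ is $d$-degenerate, $G$ has a vertex $v$ with $1\le |N_G(v)|\le d$; fix a neighbor $u$ of $v$. The key move is to contract the edge $vu$: the resulting simple graph $G'=G/vu$ is a minor of $G$, hence lies in $\mathcal{F}$, and it has one fewer vertex, so by minimality $G'$ has an odd $(2d+1)$-coloring $\psi$. Let $w$ be the vertex of $G'$ obtained by identifying $v$ and $u$. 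I pull $\psi$ back to $G$ by keeping all colors on $V(G)\setminus\{v,u\}$, setting $\varphi(u):=\psi(w)$, and leaving $\varphi(v)$ to be chosen from $[2d+1]$ at the end.

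The first observation is that $v$ meets the odd condition no matter which color it ultimately gets. Every neighbor of $v$ other than $u$ is a vertex of $G'$ adjacent to $w$, so its $\psi$-color (equivalently, its $\varphi$-color) differs from $\psi(w)=\varphi(u)$; hence the color $\varphi(u)$ is used exactly once on $N_G(v)$, namely on $u$, which is an odd number of times. The second observation is that every vertex outside the closed neighborhood of $v$ still meets the odd condition: after identifying $w$ with $u$, such a vertex has the same neighbors and the same multiset of neighbor-colors in $G$ as it did in $G'$, where $\psi$ was an odd coloring. So the only vertices needing attention are the (necessarily non-isolated) neighbors of $v$, and they are controlled through the choice of $\varphi(v)$.

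For a neighbor $x$ of $v$, the multiset of colors on $N_G(x)$ under $\varphi$ is exactly the multiset $A_x$ of colors on $N_G(x)\setminus\{v\}$ together with the single extra color $\varphi(v)$. Adding one element flips the parity of at most one color class, so $x$ fails the odd condition only in the special situation that $A_x$ has a unique color of odd multiplicity and $\varphi(v)$ happens to equal it; thus each neighbor of $v$ forbids at most one value of $\varphi(v)$. Together with properness, which forbids the at most $|N_G(v)|$ colors appearing on $N_G(v)$, at most $2\,|N_G(v)|\le 2d$ values of $\varphi(v)$ are excluded, so some color in $[2d+1]$ is still available for $v$. Making that choice, $\varphi$ is proper (properness away from $v$ is inherited from $\psi$ via the identification $w\leftrightarrow u$) and every non-isolated vertex satisfies the odd condition, so $\varphi$ is an odd $(2d+1)$-coloring of $G$, contradicting the choice of $G$.

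I expect the only delicate point to be the bookkeeping in the last step: one must pin down exactly when re-inserting $v$, with a color necessarily different from $\varphi(u)$, can destroy a neighbor's odd condition, and check that this costs precisely one forbidden color per neighbor — so that the $|N_G(v)|$ colors for properness plus the $|N_G(v)|$ colors for repairs use up at most $2d$ of the $2d+1$ colors, leaving one free. This accounting is exactly why the bound $2d+1$ comes out.
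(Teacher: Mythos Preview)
The paper does not prove this theorem; it is quoted from \cite{CLS2023} and used as a black box, so there is no in-paper proof to compare against. Your argument is correct and is essentially the standard proof: take a minimum counterexample, contract an edge at a vertex $v$ of degree at most $d$ (staying inside $\mathcal{F}$ by minor-closure), pull back the odd $(2d+1)$-coloring with $\varphi(u)=\psi(w)$, and then choose $\varphi(v)$ avoiding the at most $|N_G(v)|$ colors on $N_G(v)$ (properness) and the at most $|N_G(v)|$ ``unique odd color'' values coming from the parity analysis at each neighbor, leaving at least one of the $2d+1$ colors free. Your bookkeeping is sound; in particular, the case $x=u$ is covered by the same parity argument as the other neighbors, and vertices outside $N_G[v]$ indeed see the same multiset of neighbor-colors in $G$ as in $G'$.
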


For a positive integer $k$, let $\mathcal{G}_k$ be the set of all graphs with tree-width at most $k$.
It is known that $\mathcal{G}_k$ is a minor-closed family and that every graph in $\mathcal{G}_k$ is $k$-degenerate.
For every positive integer $k$, every $k$-tree has tree-width exactly $k$, and thus the following corollary immediately follows from Theorem~\ref{thm:minor closed}.

\begin{corollary}\label{cor:known bound}
  Let $k$ be a positive integer.
  Every graph of tree-width at most $k$ is odd $(2k+1)$-colorable.
  In particular, every $k$-tree is odd $(2k+1)$-colorable.
\end{corollary}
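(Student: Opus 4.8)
The plan is to realize the class of graphs of tree-width at most $k$ as a concrete instance satisfying the hypotheses of Theorem~\ref{thm:minor closed} with $d=k$, and then to locate every $k$-tree inside this class. Writing $\mathcal{G}_k$ for the family of all graphs of tree-width at most $k$, I would reduce the entire statement to verifying two structural properties of $\mathcal{G}_k$: that it is minor-closed, and that each of its members is $k$-degenerate. Once both are confirmed, Theorem~\ref{thm:minor closed} applies verbatim and yields that every graph in $\mathcal{G}_k$ is odd $(2k+1)$-colorable.

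For the minor-closedness I would invoke the standard monotonicity of tree-width under minors. Given a tree decomposition of a graph $G$ of width at most $k$, one obtains a decomposition of width at most $k$ for any subgraph by deleting the relevant vertices from every bag, and for any single edge contraction by identifying the two contracted vertices within each bag that contains either endpoint. Since every minor of $G$ arises from a sequence of vertex deletions, edge deletions, and edge contractions, none of which increases the width, any minor $H$ of a graph in $\mathcal{G}_k$ again satisfies $\mathrm{tw}(H)\le k$, so $H\in\mathcal{G}_k$.

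For the $k$-degeneracy I would start from a smooth tree decomposition $(T,\{B_t\}_{t\in V(T)})$ of $G\in\mathcal{G}_k$, in which every bag has size exactly $k+1$ and adjacent bags differ in exactly one vertex. Choosing a leaf $t$ of $T$ with neighbor $t'$, there is a unique vertex $v\in B_t\setminus B_{t'}$; by the connectivity of the bags containing $v$ together with $v\notin B_{t'}$, the only bag containing $v$ is $B_t$. Since each edge of $G$ lies in some common bag, every neighbor of $v$ belongs to $B_t\setminus\{v\}$, whence $\deg_G(v)\le k$. As tree-width is inherited by subgraphs, every subgraph of $G$ likewise contains a vertex of degree at most $k$, which is precisely $k$-degeneracy.

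With both hypotheses in hand, Theorem~\ref{thm:minor closed} delivers the first assertion. For the ``in particular'' clause, I would note that every $k$-tree has tree-width exactly $k$: an addition ordering furnishes a tree decomposition of width $k$, while the copy of $K_{k+1}$ inside a $k$-tree forces the width to be at least $k$. Hence every $k$-tree lies in $\mathcal{G}_k$ and is therefore odd $(2k+1)$-colorable. The only step requiring any care is the degeneracy argument via a leaf bag; everything else is a direct appeal to the cited theorem, so I anticipate no genuine obstacle.
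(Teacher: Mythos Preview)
Your proposal is correct and follows essentially the same route as the paper: define $\mathcal{G}_k$ as the class of graphs of tree-width at most $k$, note that it is minor-closed and that all its members are $k$-degenerate, apply Theorem~\ref{thm:minor closed} with $d=k$, and then observe that every $k$-tree has tree-width exactly $k$. The only difference is that the paper simply states the minor-closedness and $k$-degeneracy of $\mathcal{G}_k$ as well-known facts, whereas you supply short verifications of each; your leaf-bag argument for degeneracy is standard and fine (with the trivial caveat that graphs on at most $k$ vertices need no smooth decomposition, since every vertex already has degree below $k$).
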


Our first main result improves the upper bound of the odd chromatic number of $k$-trees as follows.

\begin{theorem}\label{thm:ktree}
  For every positive integer $k$, every $k$-tree is odd $\left(k+2\left\lfloor\log_2 k\right\rfloor+3\right)$-colorable.
\end{theorem}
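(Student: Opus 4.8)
The plan is to begin from the unique proper $(k+1)$-colouring $\varphi_0$ of $G$ with palette $[k+1]$ and to spend the remaining $2\lfloor\log_2 k\rfloor+2$ colours — an \emph{auxiliary palette} $A$ — only where the odd condition fails. Fix an addition ordering and, for a vertex $v$, split $N_G(v)=Q_v\cup F_v$, where $Q_v$ is the clique of $k$ ``canonical partners'' of $v$ (the other vertices of the $(k+1)$-clique that determines $\varphi_0(v)$, coloured by $[k+1]\setminus\{\varphi_0(v)\}$ with each colour exactly once) and $F_v$ is the set of remaining neighbours. A short computation shows that $v$ fails the odd condition under $\varphi_0$ precisely when every colour of $[k+1]\setminus\{\varphi_0(v)\}$ appears an odd number of times in $F_v$; call such a $v$ \emph{needy}. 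In particular $|F_v|\ge k$, so needy vertices have degree at least $2k$ and are far from simplicial.

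The theorem then reduces to the following \emph{local repair} statement: there exist $H\subseteq V(G)$ and $g\colon H\to A$ such that (a) every needy vertex has a neighbour in $H$, and (b) for every vertex $w$ and every colour $a\in A$, at most one neighbour of $w$ lies in $H$ with $g$-colour $a$. Given such $H$ and $g$, recolour every $h\in H$ by $g(h)$. Properness is immediate: a base colour never meets an auxiliary one, and two adjacent vertices of $H$ with the same $g$-colour would violate (b) at any common neighbour (which exists since $k\ge 2$). The odd condition is checked vertex by vertex: if $w$ has a neighbour in $H$, then by (b) the colour of that neighbour appears exactly once in $N_G(w)$; if $w$ has no neighbour in $H$, then $N_G(w)$ is coloured exactly as under $\varphi_0$, so $w$ is fine unless it is needy, which (a) excludes. (Throughout we may assume $k\ge 9$, since for $k\le 8$ the bound $k+2\lfloor\log_2 k\rfloor+3$ is at least $2k+1$ and the result follows from Corollary~\ref{cor:known bound}.)

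It remains — and this is the heart of the argument — to produce $H$ and $g$ using only $2\lfloor\log_2 k\rfloor+2$ auxiliary colours, and here the structure of $k$-trees, above all that every clique has at most $k+1$ vertices, is essential. One works along the rooted tree of bags of $G$: helpers are chosen greedily among the forward-neighbours of still-uncovered needy vertices while being kept as spread out as possible, so that the ``conflict graph'' on $H$ — whose edges join helpers that are adjacent or have a common neighbour — has small chromatic number, which is enough to realise (b). The point to establish is that this conflict graph is $\bigl(2\lfloor\log_2 k\rfloor+2\bigr)$-colourable: a vertex can only acquire many helpers in its neighbourhood through a nested family of $k$-cliques, and organising the at most $k+1$ vertices of a maximal clique dyadically collapses such a family to $\lfloor\log_2 k\rfloor+1$ scales, with two colours per scale making (b) attainable while the helpers are being \emph{recoloured} rather than merely marked. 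This is also where an unavoidable-set style analysis is natural: one exhibits a family of local configurations that cannot all be absent, each of which either lets a helper be chosen cheaply or reduces to a smaller $k$-tree.

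\textbf{Main obstacle.} Everything rests on this last step: showing that helpers dominating all needy vertices can be both selected and $A$-coloured with only $2\lfloor\log_2 k\rfloor+2$ colours. This demands a genuine structural fact — that neediness cannot concentrate densely enough to force many pairwise-close helpers of the same colour — together with a careful coupling between the choice of $H$ and condition (b). Pushing the surplus over $k+1$ down to $2\lfloor\log_2 k\rfloor+2$, rather than the $\Theta(k)$ that a naive bound on ``unsafe'' colours gives, is precisely the crux.
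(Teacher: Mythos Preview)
Your proposal is a plan, not a proof: you explicitly label the central step as the ``Main obstacle'' and leave it unresolved. The reduction you describe --- start from the unique $(k+1)$-colouring, recolour a set $H$ of ``helpers'' with auxiliary colours so that (a) every needy vertex sees a helper and (b) no vertex sees two helpers of the same auxiliary colour --- is sound, and your verification that (a)+(b) yield an odd colouring is correct. But condition (b) is exactly a proper colouring of $G^2[H]$, so what you must prove is that the needy vertices admit a dominating set $H$ with $\chi(G^2[H])\le 2\lfloor\log_2 k\rfloor+2$. Nothing in the proposal establishes this. The sentence about ``organising the at most $k+1$ vertices of a maximal clique dyadically'' is suggestive but not an argument: a single high-degree vertex $w$ forces all helpers in $N_G(w)$ to receive distinct auxiliary colours, and you give no mechanism that prevents $|H\cap N_G(w)|$ from exceeding $2\lfloor\log_2 k\rfloor+2$. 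Without that control, the plan collapses to the naive $\Theta(k)$ surplus you yourself mention.

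The paper avoids this global difficulty entirely. Its proof is inductive on $|V(G)|$: it locates a branch $B$ whose root $W$ is a $k$-clique and whose first $r=\lfloor\log_2 k\rfloor+1$ added vertices $u_1,\dots,u_r$ all have small degree; it colours $G-(V(B)\setminus W)$ by induction; then it colours $u_1,\dots,u_r$ one at a time, each time \emph{halving} the set $W_i\subseteq W$ of root vertices not yet guaranteed an odd colour class. Because $|W_0|\le k$, after $r>\log_2 k$ steps $W_r=\emptyset$. This is where the logarithm genuinely enters: it is a halving argument on the $k$ root vertices of \emph{one} branch, not a global distance-$2$ colouring bound. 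The remaining vertices of $B$ are then coloured greedily with enough slack to protect $\overline{W}$ and $u_0$. So the paper's $\log k$ is local and constructive, whereas yours would have to be a structural bound on helper placement across the whole graph --- a statement you have not proved and that is not obviously true.

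One small correctable error: your base case is off. You claim $k+2\lfloor\log_2 k\rfloor+3\ge 2k+1$ for $k\le 8$, but at $k=7$ the left side is $14<15$. The paper's cutoff is $k\le 6$.
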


Let $f(k)$ be a function such that every $k$-tree is odd $(k+f(k))$-colorable.
Under this notation, we have $f(k)\leq 2\left\lfloor\log_2 k\right\rfloor+3$ from Theorem~\ref{thm:ktree}, which improves the bound of $f(k)\leq k+1$ obtained from Corollary~\ref{cor:known bound} in terms of the order of $k$.
As we mentioned above, there is a $k$-tree which is not odd $(k+1)$-colorable, and hence $f(k)$ is bounded by $2\leq f(k)\leq 2\left\lfloor\log_2 k\right\rfloor+3$.
Our second and third results show that the lower bound is attained when $k=2,3$.

\begin{theorem}\label{thm:2tree}
  Every 2-tree is odd $4$-colorable.
\end{theorem}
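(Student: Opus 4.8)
The plan is to proceed by induction on $|V(G)|$, the base cases being the small $2$-trees $K_3$ and $K_4-e$: for $K_3$ any proper $3$-coloring is already an odd $4$-coloring, since every vertex then sees two distinct colours, each exactly once, and $K_4-e$ is handled by the same kind of direct check. For the inductive step, fix an addition ordering $v_1,\dots,v_n$ of $G$ with $n\ge 5$ and let $v:=v_n$. Then $\deg_G(v)=2$, its two neighbours $x,y$ are adjacent, and $G':=G-v$ is a $2$-tree on $n-1\ge 3$ vertices; since a $2$-tree on at least three vertices has minimum degree at least $2$, one moreover gets $\deg_G(x),\deg_G(y)\ge 3$ (otherwise $x$ or $y$ would have degree $1$ in $G'$). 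By induction $G'$ has an odd $4$-coloring $\varphi$, and the task is to extend it to $v$.

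Two observations are decisive. First, no matter which colour of $[4]\setminus\{\varphi(x),\varphi(y)\}$ we assign to $v$, the vertex $v$ itself satisfies the odd condition, since its only neighbours are $x$ and $y$ and $\varphi(x)\ne\varphi(y)$. Second, write $O_u$ for the set of colours occurring an odd number of times in $N_{G'}(u)$; then $O_u\ne\emptyset$ for all $u$ because $\varphi$ is odd, and assigning colour $c$ to $v$ replaces $O_x$ by $O_x\,\triangle\,\{c\}$ and $O_y$ by $O_y\,\triangle\,\{c\}$ and changes nothing else. Hence the extension can fail only for colours $c$ with $O_x=\{c\}$ or $O_y=\{c\}$, and since two colours are available for $v$, the only genuine obstruction is the \emph{bad configuration}: $O_x$ and $O_y$ are precisely the two singletons contained in $[4]\setminus\{\varphi(x),\varphi(y)\}$. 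If it does not occur, assigning $v$ a colour that is not a singleton value of $O_x$ or $O_y$ finishes the step.

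The heart of the proof — and, I expect, its main obstacle — is therefore to handle the bad configuration. Note first that in it $\varphi(y)$ occurs an even, hence at least two, times in $N_{G'}(x)$, so $x$ has a neighbour $y'\ne y$ with $\varphi(y')=\varphi(y)$, and symmetrically $y$ has a neighbour $x'\ne x$ with $\varphi(x')=\varphi(x)$; this repetition leaves room to recolour. The natural attempt is to recolour $y'$ (or $x'$, or some other neighbour of $x$ or $y$) so that $O_x$ or $O_y$ is no longer one of the two forbidden singletons, and then to colour $v$ as in the generic case. The difficulty is that recolouring a vertex $z$ perturbs $O_w$ for every $w\in N_{G'}(z)$, so one must argue that a recolouring can always be chosen that keeps the coloring proper and destroys the odd condition nowhere; this seems to require a case analysis on the local structure around the edge $xy$ — the degrees of $x,y$ and of the recoloured vertex, whether that vertex lies in $N_{G'}(x)\cap N_{G'}(y)$, and so on — that is, an unavoidable-set-of-reducible-configurations argument specialized to $k=2$.

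To organize this I would first dispose of the easy sub-structures. For instance, if some edge $xy$ carries two degree-$2$ vertices $v,w$ with $N_G(v)=N_G(w)=\{x,y\}$, the configuration reduces directly: delete $v$ and $w$, colour $G-v-w$ by induction, and re-add $v$ and $w$ with one and the same colour from $[4]\setminus\{\varphi(x),\varphi(y)\}$, which leaves $O_x,O_y$ unchanged and makes both $v$ and $w$ automatically odd. The remaining, genuinely hard case is the one in which no edge carries two pendant degree-$2$ vertices; there one must either push through the recolouring casework above or, more cleanly, strengthen the induction hypothesis so that the bad configuration is forbidden outright — for example by requiring that for every edge at least one endpoint $u$ has $|O_u|\ge 2$, or that certain small colour patterns never appear — and then verify that this stronger statement is preserved by the extension step, which should be routine since adding $v$ alters $O_x,O_y$ by a single colour and contributes the $2$-element set $\{\varphi(x),\varphi(y)\}$ as $O_v$. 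Identifying the right strengthening, and checking its invariance through every reduction, is where I expect the real work to be.
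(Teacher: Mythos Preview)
Your proposal correctly isolates the crux --- the ``bad configuration'' in which $O_x$ and $O_y$ are exactly the two singletons in $[4]\setminus\{\varphi(x),\varphi(y)\}$ --- but it does not resolve it. Both remedies you suggest are left as hopes rather than arguments. The recolouring idea runs into the cascade you yourself point out: changing the colour of some $z\in N_{G'}(x)$ alters $O_w$ for every $w\in N_{G'}(z)$, and you offer no mechanism to bound or control this. The strengthened induction hypothesis (``for every edge at least one endpoint has $|O_u|\ge 2$'') is not shown to be maintainable; in fact adding $v$ with colour $c$ can shrink $O_x$ from a $2$-element set $\{c,d\}$ down to $\{d\}$, so you would immediately have to verify the strengthened condition on every edge incident with $x$ and $y$, and there is no evident reason this can always be arranged with a single colour choice.

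The paper takes a structurally different route that sidesteps your obstruction. Rather than always deleting a single degree-$2$ vertex, it proves an unavoidable-set lemma (Lemma~\ref{lem:odd 2tree unavoidable}): every $2$-tree of order at least $4$ contains a configuration from an explicit finite list $\mathcal{H}^{(2)}(G)\cup\mathcal{T}^{(2)}(G)$, built out of ears, hats, and double hats, that is \emph{not} itself a bare hat or double hat. The key extension lemma (Lemma~\ref{lem:odd 2tree nearodd}) shows that an ear, hat, or double hat can always be coloured so that every vertex except possibly \emph{one} designated root vertex satisfies the odd condition. The larger configurations are then reduced by combining several such branches so that the ``unpaid debt'' at one root is absorbed by a second branch. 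Your twin-ear reduction is exactly the paper's $\mathcal{T}^{(2)}_{2,0,0}$ case, but the other cases --- e.g.\ a branch in $\mathcal{H}^{(2)}_{1,0,0}(G)$, or a hat paired with another branch in $\mathcal{T}^{(2)}$ --- are what actually dissolves the bad configuration, and those are precisely the parts your sketch is missing.
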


\begin{theorem}\label{thm:3tree}
  Every 3-tree is odd $5$-colorable.
\end{theorem}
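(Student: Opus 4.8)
The plan is to argue by contradiction via a minimal counterexample. Let $G$ be a $3$-tree that is not odd $5$-colorable with $|V(G)|$ as small as possible, and fix an addition ordering $v_1,\dots,v_n$. Since $K_4$ is odd $5$-colorable (color its four vertices $1,2,3,4$, so that each vertex sees three colors exactly once), we have $n\ge 5$, hence $v_n$ is simplicial and $N_G(v_n)$ is a triangle. The single most useful observation is that a vertex of degree $3$ whose three neighbors carry three distinct colors automatically satisfies the odd condition, since each color then appears exactly once among its neighbors. Consequently, if we delete some vertices from the tail $v_n,v_{n-1},\dots$ of the ordering, take an odd $5$-coloring of the resulting (smaller) $3$-tree by minimality, and reinsert the deleted vertices one at a time with proper colors, the deleted vertices cause no trouble on their own; we only have to restore the odd condition at the few older vertices whose neighborhoods grew.

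To track this, for a partial coloring and a vertex $x$ let $O(x)$ be the set of colors appearing an odd number of times among the already-colored neighbors of $x$; a coloring is odd precisely when $O(x)\ne\emptyset$ for every non-isolated $x$, and inserting a neighbor of color $i$ toggles whether $i\in O(x)$. The first reducible configuration is a triangle $T$ with two attached simplicial vertices $u_1,u_2$, i.e.\ $N_G(u_1)=N_G(u_2)=T$. Delete $u_1$ and $u_2$; the result is a $3$-tree, so it has an odd $5$-coloring in which $T$ uses three of the five colors; now color both $u_1$ and $u_2$ with one fixed spare color. Each $u_j$ sees the three colors of $T$ once and is fine, and at every $x\in T$ each spare color is toggled an even number of times (namely $0$ or $2$) while the three colors of $T$ are not toggled at all, so $O(x)$ is unchanged and the odd condition persists at $x$. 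Hence $G$ has no triangle carrying two simplicial vertices; since every $3$-tree on five vertices does carry one, $n\ge 6$.

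The next reduction uses the tail more aggressively. In $G-v_n$ the vertex $v_{n-1}$ is simplicial, because the only vertex added after $v_{n-1}$ in the ordering is $v_n$, so $N_G(v_{n-1})$ with $v_n$ removed is exactly the triangle $v_{n-1}$ was added onto; thus $G-\{v_n,v_{n-1}\}$ is again a $3$-tree. Take an odd $5$-coloring of it by minimality, reinsert $v_{n-1}$ (two colors available, outside the triangle it attaches onto), and then reinsert $v_n$ (again at least two admissible colors). The vertices whose $O$-sets might be spoiled all lie in the two triangles $T=N_G(v_n)$ and $T'=N_G(v_{n-1})\setminus\{v_n\}$; each such vertex rules out only a bounded number of the available color pairs, through a condition of the shape ``$O(x)$ equals exactly the set of newly added colors at $x$''. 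One checks that some admissible choice always survives these constraints, except when the forbidden patterns at the two or three vertices adjacent to everything newly inserted conspire in a very particular way; in that residual case one instead deletes the three tail vertices $v_n,v_{n-1},v_{n-2}$ (equivalently, performs a small local recoloring around $T$), which provides enough extra freedom to conclude. These one-, two-, or three-vertex tail deletions are available in any $3$-tree once $n$ is large enough, so beyond the finitely many small cases, checked directly, this family of configurations is unavoidable.

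The heart of the argument, and the main obstacle, is the last step: eliminating — or directly resolving — the configurations in which the ``bad'' color patterns at the few older vertices that see all the newly inserted vertices line up so as to destroy every admissible extension simultaneously. The key leverage is that a vertex forced into such a pattern must have fairly large degree: the two colors of its triangle-neighbors each have to occur an even, hence positive even, number of times among its neighbors, which forces its degree up to at least $5$ or $6$. This lets one either move the deletion to a more convenient simplicial vertex or recolor one carefully chosen vertex of $T$ without disturbing the rest of the coloring, and carrying this case analysis through is the bulk of the proof.
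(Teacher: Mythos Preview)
Your framework is the same as the paper's---minimal counterexample, delete a small configuration, extend an odd $5$-coloring of the remainder---and your treatment of the configuration ``two simplicial vertices on one triangle'' is correct and matches the paper's case $\mathcal{T}^{(3)}_{2,0,0}$. But what you have written is a plan, not a proof: the sentences ``one checks that some admissible choice always survives,'' ``in that residual case one instead deletes the three tail vertices \dots\ which provides enough extra freedom,'' and ``carrying this case analysis through is the bulk of the proof'' are assertions standing in for the entire argument. Nothing in the proposal actually verifies that the bad patterns can always be broken.

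More seriously, the strategy of simply peeling off the last one, two, or three vertices of an addition ordering does not obviously give enough leverage. With five colors and a triangle occupying three of them, each reinserted simplicial vertex has only two admissible colors, so inserting $m$ vertices gives $2^m$ choices, while up to three older vertices per insertion may lose the odd condition. The paper resolves this not by going deeper into the tail but by proving an unavoidable-set lemma: every $3$-tree of order at least $5$ contains one of a specific list of branches (ears, one-hats, one-hat pluses, and prescribed combinations thereof, the families $\mathcal{H}^{(3)}$ and $\mathcal{T}^{(3)}$), excluding the two ``bad'' shapes. Each configuration is then shown reducible in its own case, exploiting structural features---for instance, a vertex $u_0$ of degree $6$ whose neighborhood is forced to carry four distinct colors automatically has an odd color---that are simply not visible from the raw tail ordering. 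Your degree lower bound (``at least $5$ or $6$'') is in the right spirit, but without pinning down the local picture you cannot guarantee the recoloring or the alternate deletion you allude to; the paper's eleven cases are exactly what is required to close these gaps, and your proposal does not carry any of them out.
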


Since every maximal outerplanar graph is a 2-tree, Theorem~\ref{thm:2tree} implies that every maximal outerplanar graph is odd $4$-colorable, which is a corollary of the result in~\cite{KMOOT2024+}.

Considering these results and the result that every tree is odd $3$-colorable in Caro, Petru\v{s}evski, and \v{S}krekovski~\cite{CPS2022}, we pose the following conjecture, which states that the lower bound of $f(k)$ is attained for every $k$.

\begin{conjecture}\label{conj:ktree}
  For every positive integer $k$, every $k$-tree is odd $(k+2)$-colorable.
\end{conjecture}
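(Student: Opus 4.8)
The natural line of attack is the one the paper already uses for $k\in\{2,3\}$: argue by contradiction via a vertex-minimal counterexample $G$ (for a fixed $k$), build a list of \emph{reducible configurations} — small subgraphs that cannot occur in $G$ — and then run a discharging argument on $G$ to show that at least one of them must occur. The structural fact one exploits repeatedly is that in any $k$-tree a simplicial vertex $v$ of degree exactly $k$ has, as its neighbourhood, a $k$-clique $Q$, so if $\varphi$ is any proper $(k+2)$-colouring then $\varphi$ restricted to $Q$ is rainbow and \emph{every} colour of $\varphi(Q)$ already occurs an odd number of times — namely once — among the neighbours of $v$. Hence a freshly re-inserted simplicial vertex automatically satisfies the odd condition, and the only thing that can go wrong when one deletes simplicial vertices, colours the rest by minimality (the remainder is again a $k$-tree), and puts them back is that re-insertion destroys the odd condition at some vertex of $Q$.

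This observation already yields a first reducible configuration: if a $k$-clique $Q$ has two simplicial vertices $v,v'$ with $N_G(v)=N_G(v')=Q$, colour $G-\{v,v'\}$ by minimality, let $a$ be one of the (at least two) colours missing on $Q$, and give both $v$ and $v'$ the colour $a$. Then every $u\in Q$ sees its count of colour $a$ increase by exactly $2$ and no other count change, so its odd witness survives; $v$ and $v'$ are non-adjacent and properly coloured; hence $G$ was not a counterexample. More elaborate configurations in the same spirit — a bounded cluster of simplicial vertices distributed over pairwise nested $k$-cliques, or a simplicial vertex whose clique has all other vertices of small degree — should likewise be reducible by the same bookkeeping: one recolours within the two colours free on each clique so that the parity perturbations cancel in pairs. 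For $k=2$ and $k=3$ the palette ($4$ and $5$ colours) is small enough that a finite list of such configurations, together with a short discharging argument (initial charge $\deg(v)-2k$, so the total is negative, with high-degree vertices sending charge to incident low-degree and simplicial vertices), forces a contradiction; this is essentially the content of Theorems~\ref{thm:2tree} and~\ref{thm:3tree}, and the base case $k=1$ is the known fact that trees are odd $3$-colourable.

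For general $k$ the discharging part is routine once the reducible set is rich enough, and the whole difficulty is in producing that set. The obstruction is the tension between the $k$ parity constraints that live on a $k$-clique $Q$ and the mere two colours free on $Q$: a $k$-clique carrying a single simplicial neighbour, with every vertex of $Q$ of moderately large degree and only one ``fragile'' odd witness, is not visibly reducible, and such configurations seem hard to rule out by any finite, purely local analysis. I expect overcoming this to be the real content of the conjecture — one would need either to enlarge the window of vertices recoloured simultaneously so that more than two colours become available, or to allow a global correction that repairs broken witnesses elsewhere, or to find an altogether different argument (for instance an induction on $k$, or a top-down colouring carrying strong parity invariants along the recursive tree structure of the $k$-tree) — which is presumably why the statement is posed as a conjecture rather than proved here.
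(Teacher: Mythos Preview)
The statement is a \emph{conjecture}: the paper does not prove it, and offers it as an open problem motivated by the cases $k=1,2,3$. Your write-up is not a proof either, and you are explicit about this in your final paragraph, so there is no discrepancy to flag on that front.

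Two small corrections about the paper's actual methods. First, the proofs of Theorems~\ref{thm:2tree} and~\ref{thm:3tree} do not use discharging. The unavoidability step (Lemmas~\ref{lem:odd 2tree unavoidable} and~\ref{lem:3tree unavoidable}) is obtained by iteratively peeling off the simplicial vertices of $G$, $G-V_0$, $G-(V_0\cup V_1)$, and so on, and arguing directly that after at most three layers one is forced into one of the listed configurations; no charge is ever assigned or moved. Second, your sample reducible configuration --- two simplicial vertices with the same $k$-clique neighbourhood, both given a colour missing on that clique --- is exactly the paper's handling of $\mathcal{T}^{(2)}_{2,0,0}$ and $\mathcal{T}^{(3)}_{2,0,0}$, so that part of your sketch matches the paper precisely.

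Your diagnosis of the obstruction for general $k$ --- that a single simplicial vertex on a $k$-clique imposes $k$ parity constraints while only two colours are free --- is a fair informal summary of why the layer-by-layer reducibility argument does not obviously extend, and is consistent with the paper's decision to leave the general case open.
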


This paper is organized as follows; we give proofs of Theorems~\ref{thm:ktree} and \ref{thm:2tree} in Sections~\ref{section:ktree} and \ref{section:2tree}, respectively.
The proof of Theorem~\ref{thm:3tree} goes similarly with that of Theorem~\ref{thm:2tree} but it needs complicated case-analysis, thus we put it in the appendix.

Before we go to the next section, we give some easy observations that we use in our proofs.
For any graph $G$ and its proper coloring $\varphi$, it follows that $d_G(v)=\sum_{i=1}^k|\varphi^{-1}(i)\cap N_G(v)|$ for each vertex $v$ of $G$.
Thus, $v$ has an even degree if $|\varphi^{-1}(i)\cap N_G(v)|$ is even for all $i\in [k]$, which implies the following.

\begin{observation}\label{ob:odd deg vtx}
  Let $G$ be a graph, and let $v$ be a vertex of $G$.
  If the degree of $v$ is odd, then $v$ satisfies the odd condition with respect to any proper coloring of $G$.
\end{observation}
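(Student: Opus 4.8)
The plan is to argue by a straightforward parity count on the color classes restricted to the neighborhood of $v$. Fix an arbitrary proper coloring $\varphi\colon V(G)\to [k]$ of $G$ for some positive integer $k$; the goal is to exhibit a color $i\in[k]$ with $|\varphi^{-1}(i)\cap N_G(v)|$ odd, which is exactly the definition of $v$ satisfying the odd condition with respect to $\varphi$.

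First I would observe that the sets $\varphi^{-1}(1)\cap N_G(v),\dots,\varphi^{-1}(k)\cap N_G(v)$ form a partition of $N_G(v)$: every neighbor of $v$ receives exactly one color, hence lies in exactly one of these sets. Summing cardinalities therefore yields $d_G(v)=|N_G(v)|=\sum_{i=1}^{k}|\varphi^{-1}(i)\cap N_G(v)|$, which is precisely the identity already recorded in the paragraph preceding the statement.

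Next I would invoke the hypothesis that $d_G(v)$ is odd. A finite sum of integers is odd only if at least one summand is odd; equivalently, were every $|\varphi^{-1}(i)\cap N_G(v)|$ even, the right-hand side of the displayed identity would be even, contradicting that $d_G(v)$ is odd. Hence some color $i\in[k]$ has $|\varphi^{-1}(i)\cap N_G(v)|$ odd, so $v$ satisfies the odd condition with respect to $\varphi$. Since $\varphi$ was an arbitrary proper coloring, the conclusion holds for every proper coloring of $G$.

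I do not anticipate any genuine obstacle: the statement is an immediate consequence of the degree-decomposition identity already stated in the text together with the trivial fact that a sum of even integers is even. The only minor point to keep track of is that the coloring may use an arbitrary number of colors, but this merely indexes the finite partition and plays no further role.
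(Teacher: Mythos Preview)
Your proof is correct and follows exactly the same approach as the paper: partition $N_G(v)$ into color classes, use the identity $d_G(v)=\sum_{i=1}^{k}|\varphi^{-1}(i)\cap N_G(v)|$, and conclude by the contrapositive that not all summands can be even when $d_G(v)$ is odd. This is precisely the argument the paper gives in the paragraph immediately preceding the observation.
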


Since every vertex of a $k$-tree $G$ is contained in a clique of order $k+1$, for any proper coloring of $G$, every vertex has at least $k$ distinct colors that appear in its neighbors.
Thus, the following observation holds.

\begin{observation}\label{ob:small deg vtx}
  Let $G$ be a $k$-tree for some positive integer $k$, and let $v$ be a vertex of $G$.
  If $d_G(v)\leq 2k-1$, then $v$ satisfies the odd condition with respect to any proper coloring of $G$.
\end{observation}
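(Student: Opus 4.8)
The plan is a short proof by contradiction that uses only the two facts recorded just before the statement: that every vertex of a $k$-tree lies in a copy of $K_{k+1}$ — so under any proper coloring at least $k$ pairwise distinct colors appear on its neighborhood — together with the identity $d_G(v)=\sum_{i}|\varphi^{-1}(i)\cap N_G(v)|$.

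I would fix an arbitrary proper coloring $\varphi$ of $G$ and assume, toward a contradiction, that $v$ does not satisfy the odd condition, i.e.\ $|\varphi^{-1}(i)\cap N_G(v)|$ is even for every color $i$. Then every color that actually occurs on $N_G(v)$ occurs at least twice, since a positive even integer is at least $2$. Letting $C$ be the set of colors appearing on $N_G(v)$, the remark preceding the statement gives $|C|\ge k$, and hence
\[
  d_G(v)=\sum_{i\in C}\bigl|\varphi^{-1}(i)\cap N_G(v)\bigr|\ge 2|C|\ge 2k,
\]
contradicting $d_G(v)\le 2k-1$. This yields the observation.

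There is no real obstacle here: the argument is a pure counting argument and uses nothing about odd coloring beyond the definition. The only point worth a moment's care is that one needs $v$ to be non-isolated so that ``at least $k$ colors appear on $N_G(v)$'' is meaningful; in a $k$-tree with $k\ge 1$ this is automatic, as every vertex has degree at least $k$. Note that in the degree range under consideration this statement refines Observation~\ref{ob:odd deg vtx}, but, unlike that observation, it is specific to $k$-trees (more generally, to graphs in which every vertex lies in a $K_{k+1}$), since it is the clique structure that forces the $k$ distinct colors onto the neighborhood.
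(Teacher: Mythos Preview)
Your proof is correct and follows exactly the same approach as the paper: the paper's argument is the one-sentence remark preceding the observation (each vertex lies in a $K_{k+1}$, so at least $k$ colors appear on its neighborhood), and you have simply written out the counting step explicitly.
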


\section{$k$-trees}\label{section:ktree}

\subsection{Auxiliary result}\label{subsection:ktree lemma}

The following lemma on a special ordering of vertices of a $k$-tree can easily be shown by an induction on the order.

\begin{lemma}\label{lem:good ordering}
    Let $k$ be a positive integer and let $G$ be a $k$-tree.
    Then there is an addition ordering $v_1,v_2,\dots ,v_n$ of $G$ such that $d_G(v_1)=k$.
\end{lemma}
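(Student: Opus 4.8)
The plan is to proceed by induction on $n=|V(G)|$, the base case being $n=k+1$, where $G\cong K_{k+1}$ and every vertex has degree exactly $k$, so any addition ordering works. For the inductive step, let $G$ be a $k$-tree on $n\geq k+2$ vertices and fix any addition ordering $u_1,u_2,\dots ,u_n$ of $G$. The key observation is that the last vertex $u_n$ has degree exactly $k$ in $G$ (it was joined to a clique of order $k$ when added and received no later neighbors), and $G-u_n$ is again a $k$-tree, with $u_1,\dots ,u_{n-1}$ as an addition ordering. The subtlety is that removing $u_n$ might be the wrong vertex to delete if we want control over a low-degree vertex; instead I would argue as follows.

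First I would dispose of the easy case: if $G$ itself has a vertex of degree $k$ other than whatever plays the role of $v_1$, we can often just reorder. More carefully, here is the clean induction. Consider $G-u_n$, a $k$-tree on $n-1\geq k+1$ vertices. By the inductive hypothesis there is an addition ordering $w_1,w_2,\dots ,w_{n-1}$ of $G-u_n$ with $d_{G-u_n}(w_1)=k$. Now $u_n$ in $G$ is adjacent to a clique $C$ of order $k$ among $w_1,\dots ,w_{n-1}$; I claim $w_1,w_2,\dots ,w_{n-1},u_n$ is an addition ordering of $G$. Indeed each $w_i$ with $i\geq k+2$ still sees a clique of order $k$ among its predecessors (those predecessors lie in $G-u_n$ and were unaffected), $\{w_1,\dots ,w_{k+1}\}$ still induces $K_{k+1}$, and $u_n$ is appended last with neighbourhood-among-predecessors equal to the clique $C$ of order $k$. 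It remains to check $d_G(w_1)=k$. This holds unless $w_1\in N_G(u_n)$, i.e. $w_1\in C$, in which case $d_G(w_1)=k+1$.

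So the only obstacle is the case $w_1\in C$, and this is where I would spend the effort. In that case, pick any vertex $c\in C$ with $c\neq w_1$ (possible since $|C|=k\geq 1$; if $k=1$ the statement is trivial as every vertex of a $1$-tree, i.e.\ a tree, that is a leaf has degree $1=k$, and a tree on $\geq 2$ vertices has a leaf, so handle $k=1$ separately). The idea is to instead build the addition ordering of $G-u_n$ so that $w_1\notin C$. Concretely: $C$ is a clique of order $k$ in the $k$-tree $G-u_n$, so $C$ extends to a clique of order $k+1$, say $C\cup\{x\}$, and one can choose an addition ordering of $G-u_n$ whose first block $G_0$ is exactly $C\cup\{x\}$ and whose first vertex $w_1$ is $x$ — provided $x$ can be chosen with $d_{G-u_n}(x)=k$, equivalently $d_G(x)=k$ (as $x\notin N_G(u_n)$). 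Such an $x$ of degree $k$ exists: in any $k$-tree on more than $k+1$ vertices there are at least two vertices of degree $k$ (a standard fact, provable by the same induction — the last vertex has degree $k$, and induction gives a second one in $G-u_n$ not adjacent to $u_n$, else $u_n$'s clique would contain all low-degree vertices, a case one checks directly), so in particular we may pick a degree-$k$ vertex avoiding $C$ unless every degree-$k$ vertex of $G$ lies in $C\cup\{u_n\}$; that boundary situation forces $G$ to be small and is dispatched by hand. Assembling these pieces — the main one being the "two vertices of degree $k$" lemma — completes the induction.
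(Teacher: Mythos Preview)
Your overall plan (induction on the order, delete the last vertex $u_n$, apply the hypothesis to $G-u_n$, then append $u_n$) matches the paper's one-line hint, and your Case~1 is fine. The problem is entirely in your Case~2 ($w_1\in C=N_G(u_n)$), where the argument has real gaps. First, you assert that ``one can choose an addition ordering of $G-u_n$ whose first block is exactly $C\cup\{x\}$ and whose first vertex is $x$''; but the inductive hypothesis only promises \emph{some} addition ordering with \emph{some} degree-$k$ vertex first, not that you may prescribe the initial $(k+1)$-clique or the starting vertex. Second, the existence of a degree-$k$ vertex outside $C\cup\{u_n\}$ is deferred to an unproved ``two vertices of degree $k$'' lemma, and the residual boundary case is not actually dispatched. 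As written, the argument is circular or incomplete.

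There is a much simpler way to close Case~2 that avoids all of this. Since $w_1\in C$ and $C$ is a clique, every vertex of $C\setminus\{w_1\}$ lies in $N_{G-u_n}(w_1)$; but $d_{G-u_n}(w_1)=k$ forces $N_{G-u_n}(w_1)=\{w_2,\dots,w_{k+1}\}$, so $C\subseteq\{w_1,\dots,w_{k+1}\}$. Let $w_j$ be the unique element of $\{w_1,\dots,w_{k+1}\}\setminus C$. Then the ordering
\[
u_n,\;c_1,\dots,c_k,\;w_j,\;w_{k+2},\;w_{k+3},\dots,\;w_{n-1}
\qquad(\text{where }C=\{c_1,\dots,c_k\})
\]
is an addition ordering of $G$: the first $k+1$ vertices $\{u_n\}\cup C$ form a $K_{k+1}$; $w_j$ is adjacent to all of $C$ but not to $u_n$ (as $w_j\notin C=N_G(u_n)$), so its back-neighbourhood is the $k$-clique $C$; and each later $w_i$ ($i\ge k+2$) is not adjacent to $u_n$, so its back-neighbourhood is unchanged from the old ordering. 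The first vertex is $u_n$, which has $d_G(u_n)=k$. This replaces your entire Case~2 discussion in two lines.
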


Let $k$ be a positive integer and let $G$ be a $k$-tree of order $n$.
For a clique $V$ of order $k$ of $G$ and a vertex $u\in \bigcap_{v\in V}N_G(v)$, a \emph{branch} $B(V,u)$ of $G$ is a subgraph induced by $V\cup V(G_u)$, where $G_u$ is a component of $G-V$ that contains $u$.
Note that if $G-V$ is connected, then $\bigcap_{v\in V}N(v)=\{u\}$ for some vertex $u$ and $B(V,u)=G$.

By the definition of a $k$-tree, for any branch $B=B(V,u)$ of a $k$-tree $G$, the graph $G-(V(B)\setminus V)$ is a $k$-tree or isomorphic to the complete graph $K_k$.
Furthermore, the following lemma directly holds from the definition.

\begin{lemma}\label{lem:ktree branch order}
    Let $k$ be a positive integer and let $G$ be a $k$-tree of order $n$ with an addition ordering $v_1, v_2, \dots , v_n$.
    Let $B_i=B(V_i, v_i)$ be a branch of $G$ where $V_i=N_G(v_i)\cap \{v_j\mid 1\leq j\leq i-1\}$, and let $u_0,u_1,\dots u_s$ be an ordering of $V(B_i)\setminus V_i$ induced from the addition ordering of $G$.
    Then, 
    \begin{enumerate}[label=(\alph*)]
        \item $u_0=v_i$,
        \item $\bigl|N_B(u_j)\cap (V_i\cup\{u_\ell\mid 1\leq \ell\leq j-1\})\bigr|=k$ for every $j\in [s]$, and
        \item $\bigl|V_i\cap \bigcap_{\ell=1}^j N_B(u_\ell)\bigr|\geq \bigl|V_i\cap \bigcap_{\ell=1}^{j-1} N_B(u_\ell)\bigr|-1$ for every $j\in [s]$.
    \end{enumerate}
\end{lemma}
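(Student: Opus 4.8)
The plan is to prove (a) by a separation argument and then read off (b) and (c) from the interaction of the insertion clique of $u_j$ with the ``running'' common neighbourhood inside $V_i$. Write $u_\ell=v_{m_\ell}$, so that $i=m_0<m_1<\dots<m_s$ in the addition ordering $v_1,\dots,v_n$ of $G$, and for $\ell\in[s]$ let $V_{m_\ell}:=N_G(v_{m_\ell})\cap\{v_1,\dots,v_{m_\ell-1}\}$ be the $k$-clique of earlier neighbours attached to $v_{m_\ell}$ on insertion. For (a) I would show that $V_i$ separates $v_i$ from $\{v_1,\dots,v_{i-1}\}\setminus V_i$ in $G$: otherwise take a shortest path $P$ in $G-V_i$ from $v_i$ to some $v_j$ with $j<i$ and $v_j\notin V_i$, and let $v_t$ be its vertex of largest index (so $t\ge i$). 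If $v_t$ is internal on $P$, its two $P$-neighbours have index below $t$, hence lie in the clique $N_G(v_t)\cap\{v_1,\dots,v_{t-1}\}$, so they are adjacent and $P$ can be shortcut, contradicting minimality (or, if $t=i$, those neighbours lie in $V_i$, which $P$ avoids --- an immediate contradiction). Hence $v_t$ is an endpoint; since $j<i\le t$ this forces $v_t=v_i$, and then the $P$-neighbour of $v_i$ has index below $i$ and is adjacent to $v_i$, so it lies in $V_i$ --- again contradicting that $P$ avoids $V_i$. Thus the component $G_{v_i}$ of $G-V_i$, and hence $V(B_i)\setminus V_i=V(G_{v_i})$, contains no $v_j$ with $j<i$ but does contain $v_i$, so $u_0=v_i$.

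For (b), since $B=B_i$ is an induced subgraph, $N_B(u_j)=N_G(v_{m_j})\cap V(B_i)$; moreover the vertices of $B_i$ of index smaller than $m_j$ are exactly $V_i\cup\{u_0,\dots,u_{j-1}\}$ (each vertex of $V_i$ has index at most $i-1<m_j$, and $m_\ell<m_j$ iff $\ell<j$), so that set equals $V(B_i)\cap\{v_1,\dots,v_{m_j-1}\}$. Intersecting, the neighbours of $u_j$ among these earlier branch vertices form $V_{m_j}\cap V(B_i)$, and in fact $V_{m_j}\subseteq V(B_i)$: any neighbour of $v_{m_j}$ lies in $V_i\subseteq V(B_i)$, or else is joined to $v_{m_j}\in V(G_{v_i})$ by an edge of $G-V_i$ and hence lies in the same component $G_{v_i}$. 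Therefore this set is precisely the $k$-clique $V_{m_j}$, of size $k$, which is (b).

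For (c), set $X_\ell:=V_i\cap\bigcap_{r=1}^{\ell}N_B(u_r)$, so $X_j=X_{j-1}\cap N_B(u_j)$ and it suffices to bound $|X_{j-1}\setminus N_B(u_j)|\le 1$. Each $w\in X_{j-1}$ lies in $V_i$, so it is adjacent to $u_0=v_i$ and, by definition of $X_{j-1}$, to every $u_1,\dots,u_{j-1}$; since $V_i$ is a clique and $V_{m_j}\subseteq V_i\cup\{u_0,\dots,u_{j-1}\}$ by (b), it follows that $w$ is adjacent to every vertex of $V_{m_j}$. Hence if $X_{j-1}$ contained two distinct non-neighbours $w_1,w_2$ of $u_j$, then $w_1,w_2\notin V_{m_j}\subseteq N_B(u_j)$, $w_1w_2\in E(G)$, and $V_{m_j}\cup\{w_1,w_2\}$ would be a clique of order $k+2$ in $G$ --- impossible, because $G$ is $k$-degenerate whereas $K_{k+2}$ has minimum degree $k+1$. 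So at most one vertex of $X_{j-1}$ fails to be a neighbour of $u_j$, giving (c). I expect the only genuine idea here to be this forbidden-$K_{k+2}$ step; (a) is a routine shortcutting argument and (b) is bookkeeping, but some care is needed to verify that every member of the running common neighbourhood $X_{j-1}$ --- not just the ``obvious'' ones --- is adjacent to all of $V_{m_j}$, since that is exactly what makes the oversized clique appear.
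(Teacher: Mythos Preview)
Your argument is correct. The paper itself omits the proof, stating only that the lemma ``directly holds from the definition'', so there is nothing to compare against; your write-up supplies what the paper leaves implicit.

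One point worth flagging: in (b) you prove that
\[
\bigl|N_B(u_j)\cap (V_i\cup\{u_\ell\mid 0\le\ell\le j-1\})\bigr|=k,
\]
with $u_0$ included, whereas the printed statement has $1\le\ell\le j-1$. Your version is the correct one and is what the paper actually uses downstream (for instance where it sets $X_i=N_G(u_i)\cap(\{u_j\mid 0\le j\le i-1\}\cup W)$ and invokes the lemma to get $|X_i|=k$). The literal statement is false already at $j=1$: for $k=2$, build a $2$-tree on $v_1,\dots,v_5$ with $v_4$ attached to $\{v_1,v_2\}$ and $v_5$ attached to $\{v_1,v_4\}$; in the branch $B_4$ one has $u_1=v_5$ and $N_B(u_1)\cap V_4=\{v_1\}$, of size $1<k$. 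So you have silently repaired a typo in the paper, and correctly so.

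The separation/shortcutting argument for (a), the containment $V_{m_j}\subseteq V(B_i)$ in (b), and the forbidden-$K_{k+2}$ step in (c) are all sound. The parenthetical ``(or, if $t=i$, \dots)'' in the internal-vertex case of (a) is superfluous --- $v_i$ is an endpoint of $P$, so $v_t$ internal forces $t>i$ --- but it does no harm.
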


\subsection{Proof of Theorem~\ref{thm:ktree}}\label{subsection:ktree proof}

We fix a positive integer $k$ and let $r=\left\lfloor\log_2 k\right\rfloor+1$.
If $k\leq 6$, then we have $k+2\left\lfloor\log_2 k\right\rfloor+3\geq 2k+1$, and thus the statement follows from Corollary~\ref{cor:known bound}.
Hence, we assume that $k\geq 7$, which implies that $r+1<k$.

Let $G$ be a $k$-tree of order $n$.
The proof goes by induction on $n$.
If $n\leq k+2r+1$, then it is obvious that $G$ is odd $(k+2r+1)$-colorable.
We assume that $n>k+2r+1$, and that every $k$-tree of order less than $n$ is odd $(k+2r+1)$-colorable.

By Lemma~\ref{lem:good ordering}, let $v_1,v_2,\dots , v_n$ be an addition ordering of $G$ such that $d_G(v_1)=k$.
For each $i\in \{k+1, k+2, \dots , n\}$, let $V_i=N_G(v_i)\cap \{v_j\mid 1\leq j\leq i-1\}$ and let $B_i$ be the branch $B(V_i,v_i)$ of $G$.
Since $d_G(v_1)=k$, we have $B_{k+1}=B(\{v_1,v_2,\dots , v_k\}, v_{k+1})=G$, and thus $|V(B_{k+1})|>k+2r+1$.
Let $t$ be the largest index in $\{k+1,k+2,\dots , n\}$ such that $|V(B_t)|\geq k+r+1$.
Let $u_0, u_1, \dots , u_s$ be the ordering of $V(B_t)\setminus V_t$ with the conditions in Lemma~\ref{lem:ktree branch order}.
Note that $s\geq r$ by the choice of $B_t$.
Since $u_i$ comes after $u_0$ in the ordering $v_1, v_2, \dots v_n$, by the maximality of $t$, we have $d_G(u_i)=|N_G(u_i)|\leq|B_{u_i}|\leq k+r+1<2k$ for every $i\in\{2,3,\dots , s\}$.
For visibility, let $B:=B_t$ and let $V_t=\{w_1,w_2,\dots , w_k\}=:W$.

Let $G'=G- V(B)\setminus W$, $U=\{u_i\mid 1\leq i\leq r\}$, and $G''=G[V(G')\cup U]$.
We set $W_0=W\cap \bigcap_{i=1}^r N_G(u_i)$, and let $\overline{W}=W\setminus W_0$.
Without loss of generality, we may assume that $\overline{W}=\{w_1,w_2,\dots , w_\ell\}$ for some integer $\ell\geq 1$.
Since $G'$ is a $k$-tree of order less than $n$, by the induction hypothesis, $G'$ admits an odd $(k+2r+1)$-coloring $\varphi'$.
Without loss of generality, we may assume that $\varphi'(w_j)=j$ for each $j\in [k]$.
Let $\varphi(x)=\varphi'(x)$ for every vertex $x\in V(G')$.

\begin{claim}\label{claim:ktree injection}
  There exists an injection $\sigma: \overline{W}\to U$ such that $w\notin N_G(\sigma(w))$ for every $w\in \overline{W}$.
  In particular, $\ell\leq r$.
\end{claim}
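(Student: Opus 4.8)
The plan is to read off the injection directly from the nested chain of common neighborhoods that Lemma~\ref{lem:ktree branch order}(c) controls. For $0\le j\le r$ set $A_j:=W\cap\bigcap_{i=1}^{j}N_B(u_i)$, so that $A_0=W$; since $B$ is an induced subgraph of $G$ with $W\cup\{u_i\}\subseteq V(B)$, we have $N_B(u_i)\cap W=N_G(u_i)\cap W$, and hence $A_r=W_0$. Thus $A_0\supseteq A_1\supseteq\cdots\supseteq A_r$ is a decreasing chain inside $W$ running from $W$ down to $W_0$, and $\overline{W}=W\setminus W_0$ is the disjoint union $\bigsqcup_{j=1}^{r}(A_{j-1}\setminus A_j)$.

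First I would apply Lemma~\ref{lem:ktree branch order}(c), which gives $|A_{j-1}\setminus A_j|=|A_{j-1}|-|A_j|\le 1$ for each $j\in[r]$. Since the $r$ sets $A_{j-1}\setminus A_j$ are pairwise disjoint and each has at most one element, summing yields $\ell=|\overline{W}|\le r$, which is the ``in particular'' clause; moreover every $w\in\overline{W}$ lies in exactly one of them, so there is a well-defined index $j(w)\in[r]$ with $w\in A_{j(w)-1}\setminus A_{j(w)}$.

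Then I would define $\sigma(w):=u_{j(w)}$, which lies in $U$. Injectivity is immediate: distinct vertices of $\overline{W}$ belong to distinct (singleton) sets $A_{j-1}\setminus A_j$, hence receive distinct indices $j(w)$ and distinct images. For the non-neighbor condition, from $w\in A_{j(w)-1}$ and $w\notin A_{j(w)}=A_{j(w)-1}\cap N_B(u_{j(w)})$ we get $w\notin N_B(u_{j(w)})$; as $w$ and $u_{j(w)}$ both lie in $V(B)$ and $B$ is induced, $w\notin N_G(u_{j(w)})=N_G(\sigma(w))$, as desired.

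The only real content is the first step: recognizing that Lemma~\ref{lem:ktree branch order}(c) says exactly that the chain $A_0\supseteq\cdots\supseteq A_r$ loses at most one vertex of $W$ at each stage, and that the vertex lost at stage $j$ is, by construction, a non-neighbor of $u_j$. Once that observation is in place the rest is bookkeeping, and I anticipate no genuine obstacle; the one point deserving a word of care is the harmless identification $N_B(u_i)\cap W=N_G(u_i)\cap W$.
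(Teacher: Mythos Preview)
Your proof is correct and is essentially the paper's argument, just phrased in slightly different language: the paper defines $\sigma(w)=u_{j_w}$ with $j_w$ the smallest index such that $w\notin N_G(u_{j_w})$, which coincides with your $j(w)$ since $w\in A_{j(w)-1}\setminus A_{j(w)}$ says exactly that $w$ is a neighbor of $u_1,\dots,u_{j(w)-1}$ but not of $u_{j(w)}$. Both proofs use Lemma~\ref{lem:ktree branch order}(c) in the same way to bound $|A_{j-1}\setminus A_j|\le 1$; your version makes the decreasing chain $A_0\supseteq\cdots\supseteq A_r$ and the resulting disjoint-union decomposition of $\overline{W}$ explicit up front, whereas the paper verifies injectivity by contradiction, but the content is identical.
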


\begin{proof}
  For each $w\in \overline{W}$, let $j_w\in [r]$ be the smallest index such that $w\notin N_G(u_{j_w})$, and define $\sigma(w)=u_{j_w}$.
  If $\sigma(w)=\sigma(w')=j$ for some $w, w'\in \overline{W}$, then it follows that $w, w'\in W\cap\bigcap_{\ell=1}^{j-1}N_G(u_\ell)$ and $w,w'\notin N_G(u_j)$.
  By Lemma~\ref{lem:ktree branch order}, we have $\bigl|W\cap \bigcap_{\ell=1}^{j}N_G(u_\ell)\bigr|\geq \bigl|W\cap \bigcap_{\ell=1}^{j-1}N_G(u_\ell)\bigr|-1$, which forces $w=w'$.
  Thus, $\sigma$ is a desired injection from $\overline{W}$ to $U$.
\end{proof}

Let $\sigma$ be an injection in Claim~\ref{claim:ktree injection}.
For each $i\in [r]$, we define a color $c_i$ by $c_i=j$ if $u_i=\sigma(v_j)$ for some $j\in [\ell]$, and $c_i=k+i$ otherwise.
Note that $c_i\neq c_{i'}$ for each pair of distinct $i,i'\in [r]$.
Let $C:=\{c_i\mid 1\leq i\leq r\}$.

For $i=1,2,\dots ,r$, we sequentially define a color $\varphi(u_i)$ and a set $W_i\subseteq W_0$ as follows.
Suppose that $W_{i-1}$ has been defined, and let 
\begin{align*}
  &W_{i-1}^o=\left\{w\in W_{i-1}\;\middle|\; |\varphi^{-1}(c_i)\cap N_{G'}(w)|\text{ is odd}\right\}\text{ and}\\
  &W_{i-1}^e=\left\{w\in W_{i-1}\;\middle|\; |\varphi^{-1}(c_i)\cap N_{G'}(w)|\text{ is even}\right\}.
\end{align*}
We define $W_i$ and $\varphi(u_i)$ as $W_i=W_{i-1}^o$ and $\varphi(u_i)=c_i$ if $|W_{i-1}^o|\leq |W_{i-1}^e|$, and $W_i=W_{i-1}^e$ and $\varphi(u_i)=k+r+i$ otherwise.
By the definition, it is easy to verify that the resulting coloring of $G''$ is proper.

\begin{claim}\label{claim:ktree odd color}
  For every vertex $w\in W_0$, there is a color $c(w)\in C$ such that $|\varphi^{-1}(c(w))\cap N_{G''}(w)|$ is odd.
\end{claim}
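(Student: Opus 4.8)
\textbf{Proof plan for Claim~\ref{claim:ktree odd color}.}
The plan is to exploit the decreasing chain $W_0\supseteq W_1\supseteq\dots\supseteq W_r$ together with the fact that each $w\in W_0$ is adjacent (in $G''$) to all of $u_1,\dots,u_r$. First I would record the elementary bookkeeping identity: since $w\in W_0\subseteq W\subseteq V(G')$ and the only vertices of $G''$ lying outside $G'$ are $u_1,\dots,u_r$, we have the disjoint union $N_{G''}(w)=N_{G'}(w)\cup\{u_1,\dots,u_r\}$, so that for every color $c$,
\[
  \bigl|\varphi^{-1}(c)\cap N_{G''}(w)\bigr|=\bigl|\varphi^{-1}(c)\cap N_{G'}(w)\bigr|+\bigl|\{\,i\in[r]\mid\varphi(u_i)=c\,\}\bigr|.
\]
The second ("correction") term is exactly what converts the $G'$-parity of a color at $w$, which is recorded by the partition $W_{i-1}=W_{i-1}^{o}\cup W_{i-1}^{e}$, into the $G''$-parity we want.

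Next I would pin down the correction term for $c=c_i$. The colors $c_1,\dots,c_r$ are pairwise distinct (as already noted, because $\sigma$ is injective), and every ``fresh'' color $k+r+j$ exceeds $k+r$, whereas $c_i\le k+r$ in all cases ($c_i=j\le\ell\le r$ or $c_i=k+i\le k+r$). Hence among $u_1,\dots,u_r$ the only vertex that can receive color $c_i$ is $u_i$ itself, and $\varphi(u_i)=c_i$ holds precisely when at step $i$ we were in the case $|W_{i-1}^{o}|\le|W_{i-1}^{e}|$ (otherwise $\varphi(u_i)=k+r+i\ne c_i$). So in the displayed identity with $c=c_i$, the correction term is $1$ in that first case and $0$ in the second.

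Then I would argue $W_r=\emptyset$: by construction $|W_i|\le\min\bigl(|W_{i-1}^{o}|,|W_{i-1}^{e}|\bigr)\le|W_{i-1}|/2$, hence $|W_r|\le|W_0|/2^{r}\le k/2^{r}<1$, since $r=\lfloor\log_2 k\rfloor+1$ forces $2^{r}>k$. Consequently each $w\in W_0$ leaves the chain at a well-defined step: let $i=i(w)\in[r]$ be minimal with $w\notin W_i$, so $w\in W_{i-1}\setminus W_i$, and set $c(w)=c_i\in C$. If at step $i$ we had $|W_{i-1}^{o}|\le|W_{i-1}^{e}|$, then $W_i=W_{i-1}^{o}$ and $\varphi(u_i)=c_i$, so $w\in W_{i-1}^{e}$ means $|\varphi^{-1}(c_i)\cap N_{G'}(w)|$ is even, and adding the correction term $1$ makes $|\varphi^{-1}(c_i)\cap N_{G''}(w)|$ odd. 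If instead $|W_{i-1}^{o}|>|W_{i-1}^{e}|$, then $W_i=W_{i-1}^{e}$ and $\varphi(u_i)=k+r+i\ne c_i$, so $w\in W_{i-1}^{o}$ means $|\varphi^{-1}(c_i)\cap N_{G'}(w)|$ is odd, and the correction term $0$ keeps it odd. Either way $c(w)=c_i$ is the required color.

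The only genuinely delicate point — the main obstacle — is the middle step: verifying that the correction term for the color $c_i$ at any $w\in W_0$ is exactly $1$ or $0$ according as $\varphi(u_i)=c_i$ or not. This rests on two facts that must be stated carefully, namely that no fresh color $k+r+j$ can coincide with any $c_i$, and that the $c_i$ are mutually distinct; once these are in place, the rest is a routine unwinding of the definitions of $W_{i-1}^{o}$, $W_{i-1}^{e}$, and the halving bound.
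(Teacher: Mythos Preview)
Your proposal is correct and follows the same route as the paper: show $|W_i|\le |W_{i-1}|/2$ so that $W_r=\emptyset$, pick for each $w\in W_0$ the step $i$ at which $w$ drops out of the chain, and take $c(w)=c_i$. The paper's proof is terse and simply asserts at the end that ``by the definition of $\varphi(u_{i(w)})$'' the color $c_{i(w)}$ has odd multiplicity in $N_{G''}(w)$; your write-up spells this out by isolating the ``correction term'' $|\{j\in[r]:\varphi(u_j)=c_i\}|$ and checking it equals $[\varphi(u_i)=c_i]$, which is exactly the implicit verification the paper omits.
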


\begin{proof}
  By the definition of $W_i$, we have $|W_i|\leq \left\lfloor\frac{|W_{i-1}|}{2}\right\rfloor$ for each $i\in [r]$.
  Since $|W_0|\leq |W|=k$ and $r=\left\lfloor\log_2 k\right\rfloor+1> \log_2 k$, it follows that $|W_r|=0$.
  Thus, for every vertex $w\in W_0$, there is an index $i(w)\in [r]$ such that $w\in W_{i(w)-1}\setminus W_{i(w)}$.
  By the definition of $\varphi(u_{i(w)})$, $c_{i(w)}\in C$ and $|\varphi^{-1}(c_{i(w)})\cap N_{G''}(w)|$ is odd, as desired.
\end{proof}

Now we color the remaining vertices in $V(B)\setminus (W\cup U)$. 
Since $\{\varphi(x)\mid x\in N_G(u_0)\cap V(G'')\}\cup C\subseteq [k]\cup \{k+i\mid u_i\in U\setminus\sigma(\overline{W})\}\cup [k+r+1,k+2r]$, we have
\begin{align*}
    \bigl|\{\varphi(x)\mid x\in N_G(u_0)\cap V(G'')\}\cup C\bigr|\leq k+(r-|\overline{W}|)+r=k+2r-|\overline{W}|,
\end{align*}
and hence $\bigl|[k+2r+1]\setminus(\{\varphi(x)\mid x\in N_G(u_0)\cap V(G'')\}\cup C)\bigr|\geq |\overline{W}|+1$.
Thus, we choose a color in $[k+2r+1]\setminus(\{\varphi(x)\mid x\in N_G(u_0)\cap V(G'')\}\cup C)$ as $\varphi(u_0)$ so that every vertex of $\overline{W}$ satisfies the odd condition.

We sequentially choose colors for $u_{r+1},u_{r+2},\dots ,u_s$ as follows.
For each $i\in [r+1,s]$, let $X_i=N_G(u_i)\cap (\{u_j\mid 0\leq j\leq i-1\}\cup W)$ and let $X_i'=N_G(u_i)\cap (\{u_0\}\cup W)$.
By Lemma~\ref{lem:ktree branch order}, we know that $|X_i|=k$ for each $i\in [r+1,s]$.
Since $\varphi(w_j)=j\in C$ for every $w_j\in N_G(u_i)\cap \overline{W}$, we have
\begin{align*}
  \bigl|[k+2r+1]\setminus(\{\varphi(x)\mid x\in X_i\}\cup C)\bigr|
  &\geq (k+2r+1)-(k+r-|N_G(u_i)\cap \overline{W}|)\\
  &=|N_G(u_i)\cap \overline{W}|+r+1\geq |X_i'|+1.
\end{align*}
Hence, we can choose a color $\varphi(u_i)\in [k+2r+1]\setminus(\{\varphi(x)\mid x\in X_i\}\cup C)$ so that every vertex in $X_i'$ satisfies the odd condition. 

It is easy to see that $\varphi$ is a proper $(k+2r+1)$-coloring of $G$, and that every vertex in $V(G')\setminus W$ satisfies the odd condition with respect to $\varphi$.
By the choice of colors for $\{u_0,u_{r+1},u_{r+2}, \dots , u_s\}$, each vertex of $\overline{W}\cup \{u_0\}$ satisfies the odd condition with respect to $\varphi$.
Since $d_G(u_i)<2k$ for every $i\in [s]$, Observation~\ref{ob:small deg vtx} implies that every vertex of $\{u_i\mid 1\leq i\leq s\}$ satisfies the odd condition with respect to $\varphi$.
Furthermore, for each $w\in W_0$, since $|\varphi^{-1}(c(w))\cap N_{G''}(w)|$ is odd for some color $c(v)\in C$ by Claim~\ref{claim:ktree odd color} and $\varphi^{-1}(c(w))\cap (V(G)\setminus V(G''))=\emptyset$,
we infer that $|\varphi^{-1}(c(w))\cap N_{G}(w)|=|\varphi^{-1}(c(w))\cap N_{G''}(w)|$ is an odd integer.
Thus, $\varphi$ is an odd $(k+2r+1)$-coloring of $G$, which completes the proof of Theorem~\ref{thm:ktree}.

\section{2-trees}\label{section:2tree}

\subsection{Some special branches}

We define some special branches used in our proof.
Let $G$ be a 2-tree.
\begin{itemize}
  \item An \emph{ear} of $G$ is a branch $B(\{v_1,v_2\},u_0)$ of $G$ with 3 vertices $\{v_1,v_2,u_0\}$ such that $N_G(u_0)=\{v_1,v_2\}$.
  \item A \emph{hat} of $G$ is a branch $B(\{v_1,v_2\},u_0)$ of $G$ with 5 vertices $\{v_1,v_2,u_0,u_1,u_2\}$ such that $N_G(u_0)=\{v_1,v_2,u_1,u_2\}$, $N_G(u_1)=\{v_1,u_0\}$ and $N_G(u_2)=\{v_2,u_0\}$ (Figure~\ref{fig:odd 2tree hat}).
  A hat with the root $\{v_1,v_2\}$ is \emph{good} if $d_G(v_1)$ is equal to $4$ or an odd integer.
  \item A \emph{double hat} of $G$ is a branch $B(\{v_1,v_2\},u_0)$ of $G$ with 9 vertices $\{v_1,v_2\}\cup \{u_i\mid 0\leq i\leq 6\}$ such that $N_G(u_0)=\{v_1,v_2,u_1,u_2,u_4,u_5\}$, $N_G(u_1)=\{v_1,u_0,u_3,u_4\}$, $N_G(u_2)=\{v_2,u_0,u_5,u_6\}$, $N_G(u_3)=\{v_1,u_1\}$, $N_G(u_4)=\{u_0,u_1\}$, $N_G(u_5)=\{u_0,u_2\}$ and $N_G(u_6)=\{v_2,u_2\}$ (Figure~\ref{fig:odd 2tree double hat}).
\end{itemize}

\begin{figure}[h]
  \centering
  \begin{minipage}{0.4\columnwidth}
    \centering
    \begin{tikzpicture}[roundnode/.style={circle, draw=black,fill=white, minimum size=1.5mm, inner sep=0pt}]
      \node [roundnode] (v1) at (-1.5,0){};
      \node [roundnode] (v2) at (1.5,0){};
      \node [roundnode] (u1) at (0,1){};
      \node [roundnode] (u2) at (-1,1){};
      \node [roundnode] (u3) at (1,1){};
      
      \draw (v1)--(u2)--(u1)--(v1)--(v2)--(u3)--(u1)--(v2);
      \draw (v1)--(-2,0);
      \draw (v2)--(2,0);

      \node at (-1.5,-0.3){$v_1$};
      \node at (1.5,-0.3){$v_2$};
      \node at (0,1.3){$u_0$};
      \node at (-1,1.3){$u_1$};
      \node at (1,1.3){$u_2$};
    \end{tikzpicture}
    \caption{A hat of a 2-tree.}
    \label{fig:odd 2tree hat}
  \end{minipage}
  \begin{minipage}{0.5\columnwidth}
    \centering
    \begin{tikzpicture}[roundnode/.style={circle, draw=black,fill=white, minimum size=1.5mm, inner sep=0pt}]
      \node [roundnode] (v1) at (-3,0){};
      \node [roundnode] (v2) at (3,0){};
      \node [roundnode] (u1) at (0,0){};
      \node [roundnode] (u2) at (-1.5,1){};
      \node [roundnode] (u3) at (1.5,1){};
      \node [roundnode] (u4) at (-2.5,1){};
      \node [roundnode] (u5) at (-0.5,1){};
      \node [roundnode] (u6) at (0.5,1){};
      \node [roundnode] (u7) at (2.5,1){};
  
      \draw (v1)--(u4)--(u2)--(v1)--(u1)--(u5)--(u2)--(u1);
      \draw (v2)--(u7)--(u3)--(v2)--(u1)--(u6)--(u3)--(u1);
      \draw (v1)..controls (-1.5,-0.6) and (1.5,-0.6)..(v2);
      \draw (v1)--(-3.5,0);
      \draw (v2)--(3.5,0);

      \node at (-3,-0.3){$v_1$};
      \node at (3,-0.3){$v_2$};
      \node at (0,-0.3){$u_0$};
      \node at (-1.5,1.3){$u_1$};
      \node at (1.5,1.3){$u_2$};
      \node at (-2.5,1.3){$u_3$};
      \node at (-0.5,1.3){$u_4$};
      \node at (0.5,1.3){$u_5$};
      \node at (2.5,1.3){$u_6$};
    \end{tikzpicture}
    \caption{A double hat of a 2-tree.}
    \label{fig:odd 2tree double hat}
  \end{minipage}
\end{figure}

For a 2-tree $G$, we define two sets $\mathcal{H}^{(2)}(G)$ and $\mathcal{T}^{(2)}(G)$ of subgraphs as follows.
In the following definition, we use the notation $v_3$ as $v_1$.

Let $(a,b,c)$ be a tuple of non-negative integers with $1\leq a+b+c\leq 2$.
For a clique $\{v_1,v_2,u_0\}$ of $G$, the branch $B(\{v_1,v_2\},u_0)$ belongs to $\mathcal{H}^{(2)}_{a,b,c}(G)$ if 
\begin{itemize}
    \item $|N_G(u_0)\cap N_G(v_1)|\leq 2$, and if the equality holds, then there is a vertex $u_1$ such that $N_G(u_0)\cap N_G(v_1)=\{v_2,u_1\}$ and $B(\{v_1,u_0\},u_1)$ is either an ear, a hat or a double hat of $G$,
    \item $|N_G(u_0)\cap N_G(v_2)|\leq 2$, and if the equality holds, then there is a vertex $u_2$ such that $N_G(u_0)\cap N_G(v_2)=\{v_1,u_2\}$ and $B(\{v_2,u_0\},u_2)$ is either an ear, a hat or a double hat of $G$, and
    \item $\{B(\{v_1,u_0\},u_1), B(\{v_2,u_0\},u_2)\}$ consists of $a$ ears, $b$ hats and $c$ double hats.
\end{itemize}

Let $(a,b,c)$ be a tuple of non-negative integers with $a+b+c=2$.
For two cliques $\{v_1,v_2,u_0\}$ and $\{v_1,v_2,w_0\}$ sharing $\{v_1,v_2\}$, the subgraph of $G$ induced by $V(B(\{v_1,v_2\},u_0))\cup V(B(\{v_1,v_2\},w_0))$ belongs to $\mathcal{T}^{(2)}_{a,b,c}(G)$ if 
\begin{itemize}
  \item each of $B(\{v_1,v_2\},u_0)$ and $B(\{v_1,v_2\},w_0)$ is either an ear, a hat or a double hat of $G$, and
  \item $\{B(\{v_1,v_2\},u_0),B(\{v_1,v_2\},w_0)\}$ consists of $a$ ears, $b$ hats and $c$ double hats.
\end{itemize}
For a graph $T\in \mathcal{T}^{(2)}_{a,b,c}(G)$ induced by $V(B(\{v_1,v_2\},u_0))\cup V(B(\{v_1,v_2\},w_0))$, the set of vertices $\{v_1,v_2\}$ is called the \emph{root} of $T$.

Let $\mathcal{H}^{(2)}(G)=\bigcup_{1\leq a+b+c\leq 2}\mathcal{H}^{(2)}_{a,b,c}(G)$ and let $\mathcal{T}^{(2)}(G)=\bigcup_{a+b+c=2}\mathcal{T}^{(2)}_{a,b,c}(G)$.
Note that a hat of $G$ belongs to $\mathcal{H}^{(2)}_{2,0,0}(G)$, and a double hat of $G$ belongs to $\mathcal{H}^{(2)}_{0,2,0}(G)$.

The following lemma holds.

\begin{lemma}\label{lem:odd 2tree unavoidable}
  Let $G$ be a 2-tree of order at least $4$.
  Then there is a subgraph $H\in \mathcal{H}^{(2)}(G)\cup\mathcal{T}^{(2)}(G)$ that is neither a hat nor a double hat of $G$.
\end{lemma}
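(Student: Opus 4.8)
The plan is to argue by induction on $n=|V(G)|$, with base case $n=4$ (where a quick check shows that $G$ itself, with a suitable root and apex, lies in $\mathcal{H}^{(2)}_{1,0,0}(G)$), together with a couple of small cases that surface below. For the inductive step, fix an addition ordering $v_1,\dots,v_n$ of $G$, put $V_i=N_G(v_i)\cap\{v_1,\dots,v_{i-1}\}$ and $B_i=B(V_i,v_i)$ for $i\ge 3$, and let $t$ be the largest index with $|V(B_t)|\ge 4$; such an index exists, since for the vertex $v_4$ (attached to an edge $\{v_a,v_b\}$ of the initial triangle, with third vertex $v_c$) the branch $B(\{v_a,v_c\},v_b)$ already has at least $4$ vertices. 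Write $u_0=v_t$, $V_t=\{w_1,w_2\}$, and let $u_0,u_1,\dots,u_s$ be the ordering of $V(B_t)\setminus V_t$ inherited from the addition ordering, so $s\ge 1$. By maximality of $t$, every $v_j$ with $j>t$ has $|V(B_j)|=3$, i.e.\ $N_G(v_j)=V_j$ and $d_G(v_j)=2$; in particular each $u_i$ with $i\ge 1$ has degree $2$. A short argument using Lemma~\ref{lem:ktree branch order} then pins down the shape of $B_t$: the attachment edge of each $u_i$ lies in $V_t\cup\{u_0,\dots,u_{i-1}\}$, must meet $\{u_0,\dots,u_{i-1}\}$, and — because all the $u_i$ have degree $2$ — is forced to be $\{u_0,w_1\}$ or $\{u_0,w_2\}$. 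Hence $B_t$ consists of the triangle $\{w_1,w_2,u_0\}$ together with $p\ge 0$ vertices that are ears on the edge $\{u_0,w_1\}$ and $q\ge 0$ vertices that are ears on the edge $\{u_0,w_2\}$, where $p+q=s\ge 1$.

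Now split on $(p,q)$. If $p\ge 2$ (or, symmetrically, $q\ge 2$), then two of the ears on $\{u_0,w_1\}$ give two branches with the common root $\{u_0,w_1\}$, each of which is an ear, so their union is a member of $\mathcal{T}^{(2)}_{2,0,0}(G)$ on $4$ vertices, which is neither a hat nor a double hat. If $\{p,q\}=\{1,0\}$, then $|V(B_t)|=4$ and a direct check shows $B_t\in\mathcal{H}^{(2)}_{1,0,0}(G)$: the single ear witnesses the equality condition at $w_1$, while at $w_2$ the relevant intersection has size $1$; again the $4$ vertices rule out being a hat or a double hat. In both cases we are done.

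The remaining case $p=q=1$, in which $B_t$ is exactly a hat with root $\{w_1,w_2\}$, is the heart of the argument, since $B_t$ itself is now excluded. First, if the edge $\{w_1,w_2\}$ carries another branch $B'\ne B_t$ that is an ear, a hat, or a double hat, then $\{B_t,B'\}$ yields a member of $\mathcal{T}^{(2)}(G)$ with at least $6$ vertices, and we are done. Otherwise set $\widetilde G=G-(V(B_t)\setminus\{w_1,w_2\})$, which is a $2$-tree by the remark following Lemma~\ref{lem:ktree branch order} and has $n-3$ vertices; when $n-3\ge 4$ we apply the induction hypothesis to get some $H'\in\mathcal{H}^{(2)}(\widetilde G)\cup\mathcal{T}^{(2)}(\widetilde G)$ that is neither a hat nor a double hat of $\widetilde G$, and the cases $n\le 6$ in this subcase are disposed of by hand (for $n=5$, $G$ is the hat itself and $B(\{w_1,u_0\},w_2)\in\mathcal{H}^{(2)}_{1,0,0}(G)$; for $n=6$ the single extra vertex is forced to be an ear on $\{w_1,w_2\}$, which lands in the $\mathcal{T}^{(2)}$ case above). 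It then remains to transfer $H'$ from $\widetilde G$ to $G$. Since $\widetilde G$ is an induced subgraph of $G$, the only vertices whose neighborhoods change are $w_1$ and $w_2$ — each gaining exactly $u_0$ and one ear of $B_t$ — and the three reinstated vertices form a single component of $G-\{w_1,w_2\}$; consequently $N_G(c)=N_{\widetilde G}(c)$ for every $c\in V(\widetilde G)\setminus\{w_1,w_2\}$, and no branch of $\widetilde G$ changes in $G$ unless its root omits $\{w_1,w_2\}$ while its apex-component contains $w_1$ or $w_2$. Using these facts one shows that $H'$ remains a valid member of $\mathcal{H}^{(2)}(G)\cup\mathcal{T}^{(2)}(G)$ and is still neither a hat nor a double hat of $G$ — the latter because ears, hats, and double hats are pinned down by their induced structure together with the degrees of their special vertices, none of which can change, for the vertices interior to such a sub-branch, on passing between $\widetilde G$ and $G$.

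I expect the real obstacle to be this final transfer step, namely the exceptional situations in which reinserting the hat $B_t$ genuinely disturbs $H'$ — for instance when $w_1$ is a root vertex of $H'$ and $w_2$ is the apex of a sub-branch of $H'$. In each such situation one enumerates the few ways $w_1,w_2$ can occur in a member of $\mathcal{H}^{(2)}$ or $\mathcal{T}^{(2)}$ (as a root vertex, as an apex, or inside a sub-branch) and, when the configuration breaks, observes that $B_t$ can instead be absorbed as a sub-branch of a slightly larger member of $\mathcal{H}^{(2)}(G)$; e.g.\ in the situation just mentioned one checks that $B(\{w_1,c\},w_2)\in\mathcal{H}^{(2)}_{0,1,0}(G)$ with $B_t$ playing the role of the hat sub-branch. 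The degree-$2$ analysis of $B_t$ and the rigidity of ears, hats, and double hats keep this enumeration finite, but it is where the bulk of the work lies.
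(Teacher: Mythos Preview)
Your approach is quite different from the paper's, and the transfer step you flag as ``the real obstacle'' is indeed where the argument is incomplete.

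The paper does not induct on $|V(G)|$ at all. It peels $G$ in three layers: $V_0$ is the set of degree-$2$ vertices of $G$, then $V_1$ the degree-$2$ vertices of $G-V_0$, then $V_2$, then $V_3$. At level~$1$, a vertex $v_1\in V_1$ yields a branch $B(\{x_1,y_1\},v_1)\in\mathcal{H}^{(2)}_{1,0,0}\cup\mathcal{H}^{(2)}_{2,0,0}$ (or one finds two ears on a common edge, giving $\mathcal{T}^{(2)}_{2,0,0}$); if this branch is a hat, proceed to level~$2$, where $v_2\in V_2$ gives a branch with at least one hat sub-branch, and if that is a double hat proceed to level~$3$, where any $v_3\in V_3$ yields a branch in $\mathcal{H}^{(2)}_{0,0,c}\cup\mathcal{H}^{(2)}_{0,1,1}\cup\mathcal{H}^{(2)}_{1,0,1}$ for some $c\ge 1$, none of which is a hat or a double hat. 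The small leftover cases ($|V(G)\setminus\bigcup V_j|\le 2$) are read off directly. No transfer from a smaller graph is ever needed.

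Your inductive route may be salvageable, but the transfer step has a genuine gap beyond the sub-branch case you sketch. Suppose the $H'$ returned by induction lies in $\mathcal{H}^{(2)}_{0,1,0}(\widetilde G)$ with root $\{w_1,b\}$, apex $w_2$, and its single hat sub-branch on the $\{b,w_2\}$ side (this forces $|N_{\widetilde G}(w_2)\cap N_{\widetilde G}(w_1)|=1$, consistent with your case~(b)). In $G$ the branch $B_G(\{w_1,b\},w_2)$ acquires $B_t$ as a second hat sub-branch, on the $\{w_1,w_2\}$ side, so it lands in $\mathcal{H}^{(2)}_{0,2,0}(G)$; and since the apex then has exactly the six prescribed neighbours, this enlarged branch is \emph{precisely} a double hat of $G$. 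Your proposal does not say how to replace it. (A replacement does exist --- e.g.\ $B_G(\{b,w_2\},w_1)\in\mathcal{H}^{(2)}_{0,1,0}(G)$ with $B_t$ as the hat sub-branch --- but locating it in general, and checking that no further cascade occurs, is exactly the case analysis you have not done.) A smaller issue: your justification that some $t$ with $|V(B_t)|\ge 4$ exists is garbled (the branch $B(\{v_a,v_c\},v_b)$ you cite is not one of the $B_i$); this is easily repaired by invoking Lemma~\ref{lem:good ordering} so that $d_G(v_1)=2$ and hence $B_3=G$.
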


\begin{proof}
    Let $V_0$ be the set of vertices of degree $2$ of $G$.
    For $i=1,2,3$, we define a set of vertices $V_i$ as follows: 
    If $|V(G)\setminus\bigcup_{j=0}^{i-1}V_j|\geq 3$, then let $V_i$ be the set of vertices of $G-\bigcup_{j=0}^{i-1}V_j$ whose degrees in $G-\bigcup_{j=0}^{i-1}V_j$ are equal to $2$.
    If $|V(G)\setminus\bigcup_{j=0}^{i-1}V_j|=2$, then let $V_i=V(G)\setminus\bigcup_{j=0}^{i-1}V_j$.
    Otherwise, let $V_i=\emptyset$.
    By the definition, $V_0$, $V_1$, $V_2$, and $V_3$ are pairwise disjoint.

    The assumption $|V(G)|\geq 4$ forces $V_1\neq \emptyset$.
    If $|V(G)\setminus V_0|=2$, then the two vertices in $V(G)\setminus V_0$ form a root of a subgraph in $\mathcal{T}^{(2)}_{2,0,0}(G)$.
    Thus, we assume that $|V(G)\setminus V_0|\geq 3$.
    Let $v_1$ be a vertex in $V_1$ and let 
    $N_G(v_1)\setminus V_0=\{x_1,y_1\}$ such that $|V_0\cap N_G(v_1)\cap N_G(x_1)|\geq |V_0\cap N_G(v_1)\cap N_G(y_1)|$.
    If there are distinct vertices $u,w\in V_0$ with $N_G(u)=N_G(w)=\{v_1,x_1\}$, then $B(\{v_1,x_1\},u)\cup B(\{v_1,x_1\},w)\in \mathcal{T}^{(2)}_{2,0,0}(G)$.
    Thus, we may assume that $|V_0\cap N_G(v_1)\cap N_G(x_1)|=1$, which implies that $B(\{x_1,y_1\},v_1)\in \mathcal{H}^{(2)}_{1,0,0}(G)\cup \mathcal{H}^{(2)}_{2,0,0}(G)$.
    If $B(\{x_1,y_1\},v_1)$ is not a hat of $G$, then we are done.
    Thus, since the choice of $v_1\in V_1$ is arbitrary, we may assume that every vertex of $V_1$ is contained in a hat of $G$.

    Now we consider $V_2$.
    If $V(G)\setminus(V_0\cup V_1)=\emptyset$, then $G$ is isomorphic to a graph in Figure~\ref{fig:2tree unavoidable1}, and hence $\mathcal{H}^{(2)}_{0,1,0}(G)\neq \emptyset$.
    Suppose that $V_2\neq \emptyset$.
    If $|V(G)\setminus (V_0\cup V_1)|=2$, then the two vertices in $V(G)\setminus(V_0\cup V_1)$ form a root of a subgraph in $\mathcal{T}^{(2)}_{0,2,0}(G)$.
    Thus, we assume that $|V(G)\setminus (V_0\cup V_1)|\geq 3$.
    Let $v_2$ be a vertex of $V_2$, and let 
    $N_G(v_2)\setminus (V_0\cup V_1)=\{x_2,y_2\}$ such that $|(V_0\cup V_1)\cap N_G(v_2)\cap N_G(x_2)|\geq |(V_0\cup V_1)\cap N_G(v_2)\cap N_G(y_2)|$.
    If there are distinct vertices $u,w\in V_0\cup V_1$ such that $N_G(u)\cap (V(G)\setminus (V_0\cup V_1))=N_G(w)\cap (V(G)\setminus (V_0\cup V_1))=\{v_2,x_2\}$, then we are done since $B(\{v_2,x_2\},u)\cup B(\{v_2,x_2\},w)\in \mathcal{T}^{(2)}(G)$.
    Thus, we may assume that $|(V_0\cup V_1)\cap N_G(v_2)\cap N_G(x_2)|=1$, which implies that $B(\{x_2,y_2\},v_2)\in \mathcal{H}^{(2)}_{0,1,0}(G)\cup \mathcal{H}^{(2)}_{0,2,0}(G)\cup \mathcal{H}^{(2)}_{1,1,0}(G)$.
    If $B(\{x_1,y_1\},v_1)$ is not a double hat of $G$, then we are done.
    Thus, since the choice of $v_2\in V_2$ is arbitrary, we may assume that every vertex of $V_2$ is contained in a double hat of $G$.

    Finally, we consider $V_3$.
    If $V(G)\setminus(V_0\cup V_1\cup V_2)=\emptyset$, then $G$ is isomorphic to a graph in Figure~\ref{fig:2tree unavoidable2}, and hence $\mathcal{H}^{(2)}_{1,0,1}(G)\neq \emptyset$.
    Suppose that $V_3\neq \emptyset$.
    If $|V(G)\setminus (V_0\cup V_1\cup V_2)|=2$, then the two vertices in $V(G)\setminus(V_0\cup V_1\cup V_2)$ form a root of a subgraph in $\mathcal{T}^{(2)}_{0,0,2}(G)$.
    Thus, we assume that $|V(G)\setminus (V_0\cup V_1\cup V_2)|\geq 3$.
    Let $v_3$ be a vertex of $V_3$, and let 
    $N_G(v_3)\setminus (V_0\cup V_1\cup V_2)=\{x_3,y_3\}$ such that $|(V_0\cup V_1\cup V_2)\cap N_G(v_3)\cap N_G(x_3)|\geq |(V_0\cup V_1\cup V_2)\cap N_G(v_3)\cap N_G(y_3)|$.
    If there are distinct vertices $u,w\in V_0\cup V_1\cup V_2$ such that $N_G(u)\cap (V(G)\setminus (V_0\cup V_1\cup V_2))=N_G(w)\cap (V(G)\setminus (V_0\cup V_1\cup V_2))=\{v_3,x_3\}$, then we are done since $B(\{v_3,x_3\},u)\cup B(\{v_3,x_3\},w)\in \mathcal{T}^{(2)}(G)$.
    Thus, we may assume that $|(V_0\cup V_1\cup V_2)\cap N_G(v_3)\cap N_G(x_3)|=1$, which implies that $B(\{x_3,y_3\},v_3)\in \mathcal{H}^{(2)}_{0,0,1}(G)\cup \mathcal{H}^{(2)}_{0,0,2}(G)\cup \mathcal{H}^{(2)}_{0,1,1}(G)\cup \mathcal{H}^{(2)}_{1,0,1}(G)$.
    This completes the proof of Lemma~\ref{lem:odd 2tree unavoidable}.
\end{proof}

\begin{figure}
  \centering
  \begin{minipage}{0.4\columnwidth}
    \centering
    \begin{tikzpicture}[roundnode/.style={circle, draw=black,fill=white, minimum size=1.5mm, inner sep=0pt}]
      \node [roundnode] (v1) at (330:1){};
      \node [roundnode] (v2) at (90:1){};
      \node [roundnode] (v3) at (210:1){};
      \node [roundnode] (u1) at (30:1.5){};
      \node [roundnode] (u2) at (150:1.5){};
      \node [roundnode] (u3) at (270:1.5){};

      \draw (v1)--(u1)--(v2)--(u2)--(v3)--(u3)--(v1)--(v2)--(v3)--(v1);

      \node at (210:1.3){$u$};
      \node at (270:1.8){$v$};
      \node at (330:1.3){$w$};
    \end{tikzpicture}
    \subcaption{The case where $V(G)\setminus (V_0\cup V_1)=\emptyset$.
    It follows that $B(\{u,v\},w)\in \mathcal{H}^{(2)}_{0,1,0}(G)$.}
    \label{fig:2tree unavoidable1}
  \end{minipage}
  \hspace{0.1\columnwidth}
  \begin{minipage}{0.4\columnwidth}
    \centering
    \begin{tikzpicture}[roundnode/.style={circle, draw=black,fill=white, minimum size=1.5mm, inner sep=0pt}]
      \node [roundnode] (v1) at (330:1){};
      \node [roundnode] (v2) at (90:1){};
      \node [roundnode] (v3) at (210:1){};
      \node [roundnode] (u1) at (30:1.5){};
      \node [roundnode] (u2) at (150:1.5){};
      \node [roundnode] (u3) at (270:1.5){};
      \node [roundnode] (w1) at (0:1.5){};
      \node [roundnode] (w2) at (60:1.5){};
      \node [roundnode] (w3) at (120:1.5){};
      \node [roundnode] (w4) at (180:1.5){};
      \node [roundnode] (w5) at (240:1.5){};
      \node [roundnode] (w6) at (300:1.5){};

      \draw (v1)--(u1)--(v2)--(u2)--(v3)--(u3)--(v1)--(v2)--(v3)--(v1);
      \draw (v1)--(w1)--(u1)--(w2)--(v2)--(w3)--(u2)--(w4)--(v3)--(w5)--(u3)--(w6)--(v1);
      \node at (210:1.3){$u'$};
      \node at (270:1.8){$v'$};
      \node at (330:1.3){$w'$};
    \end{tikzpicture}
    \subcaption{The case where $V(G)\setminus (V_0\cup V_1\cup V_2)=\emptyset$.
    It follows that $B(\{u',v'\},w')\in \mathcal{H}^{(2)}_{1,0,1}(G)$.}
    \label{fig:2tree unavoidable2}
  \end{minipage}
  \caption{Special graphs in the proof of Lemma~\ref{lem:odd 2tree unavoidable}.}
\end{figure}
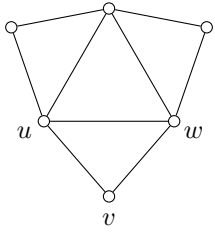
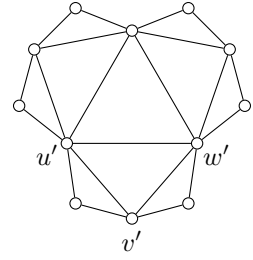

The following lemma is used repeatedly in our proof of Theorem~\ref{thm:2tree}.

\begin{lemma}\label{lem:odd 2tree nearodd}
  Let $G$ be a 2-tree.
  Suppose that $G$ has a branch $B=B(\{v_1,v_2\},u_0)$ which is either an ear, a hat, or a double hat.
  Let $G'=G-(V(B)\setminus\{v_1,v_2\})$.
  If $G'$ admits an odd 4-coloring $\varphi'$, then $\varphi'$ can be extended to a proper 4-coloring of $G$ such that every vertex of $V(G)$ other than $v_1$ satisfies the odd condition.
\end{lemma}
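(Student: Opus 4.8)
The plan is to extend $\varphi'$ to all of $G$ by colouring the vertices of $V(B)\setminus\{v_1,v_2\}$, handling separately the three possibilities for $B$ (ear, hat, double hat). We keep $\varphi=\varphi'$ on $V(G')$ and observe that attaching $B$ changes the neighbourhood of no vertex of $G'$ other than $v_1$ and $v_2$; hence every other vertex of $G'$ keeps the same neighbours with the same colours and so retains its odd witness from $\varphi'$. As $v_1$ is exempt from the conclusion, it therefore suffices to colour $V(B)\setminus\{v_1,v_2\}$ so that the resulting colouring of $G$ is proper, the vertex $v_2$ satisfies the odd condition, and every vertex of $V(B)\setminus\{v_1,v_2\}$ satisfies the odd condition. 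We may assume $\varphi'(v_1)=1$ and $\varphi'(v_2)=2$, since $v_1v_2\in E(G')$; moreover $v_2$, being adjacent to $v_1$ in $G'$, has a colour $c^\ast$ with $|(\varphi')^{-1}(c^\ast)\cap N_{G'}(v_2)|$ odd.

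Among the newly coloured vertices, Observations~\ref{ob:odd deg vtx} and~\ref{ob:small deg vtx} take care of every one that has degree $2$ or odd degree in $G$; a glance at the three gadgets shows this leaves nothing to verify for an ear, only $u_0$ for a hat, and only $u_0$, $u_1$, and $u_2$ for a double hat. The ear case is then immediate: choosing $\varphi(u_0)\in\{3,4\}$ different from $c^\ast$ (any choice if $c^\ast\in\{1,2\}$) keeps $c^\ast$ an odd witness for $v_2$ in $G$, and $u_0$ has degree $2$.

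For a hat, the proper-colouring constraints on $\varphi(u_0),\varphi(u_1),\varphi(u_2)$ are just $\varphi(u_0)\in\{3,4\}$, $\varphi(u_1)\notin\{1,\varphi(u_0)\}$, and $\varphi(u_2)\notin\{2,\varphi(u_0)\}$; since the graph induced on $N_G(u_0)=\{v_1,v_2,u_1,u_2\}$ is the path $u_1v_1v_2u_2$, a one-line parity count shows that $u_0$ fails the odd condition precisely when $(\varphi(u_1),\varphi(u_2))=(2,1)$. If $c^\ast\in\{1,2\}$, I would set $\varphi(u_0)=3$, $\varphi(u_2)=4$, $\varphi(u_1)=2$, so that $v_2$ gains only colours in $\{3,4\}$ and $c^\ast$ survives, while $(\varphi(u_1),\varphi(u_2))=(2,4)\neq(2,1)$. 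If $c^\ast=3$, I would set $\varphi(u_0)=4$, $\varphi(u_2)=1$, $\varphi(u_1)=3$, so that $v_2$ gains no colour $3$ and $(\varphi(u_1),\varphi(u_2))=(3,1)\neq(2,1)$; the case $c^\ast=4$ is the mirror image with $3$ and $4$ interchanged.

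The double hat is handled in the same spirit, and it is the main obstacle, because the proper extension must make $v_2$, $u_0$, $u_1$, and $u_2$ satisfy the odd condition all at once, and the naive choice that pairs up the leaf colours makes $u_0$ fail. The leverage is that the private leaves $u_3,u_4$ of $u_1$ and $u_5,u_6$ of $u_2$ have degree $2$, so their colours are free apart from properness, while $\varphi(u_4),\varphi(u_5)$ lie in $N_G(u_0)$ and $\varphi(u_6)$ lies in $N_G(v_2)$. Concretely, I would put $\varphi(u_0)=3$, $\varphi(u_1)=2$, $\varphi(u_3)=4$, $\varphi(u_4)=1$, $\varphi(u_6)=3$, then choose $\varphi(u_2)\in\{1,4\}$ with $\varphi(u_2)\neq c^\ast$ (possible since $|\{1,4\}|=2$; note colour $3$ picks up two new occurrences at $v_2$, so a witness at $3$ survives any choice), and finally take $\varphi(u_5)$ to be the element of $\{1,4\}\setminus\{\varphi(u_2)\}$. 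One then checks directly that the colouring is proper and that the colour multisets on $N_G(u_0)$, $N_G(u_1)$, and $N_G(u_2)$ are $\{1,1,1,2,2,4\}$, $\{1,1,3,4\}$, and $\{2,3,3,\varphi(u_5)\}$, each carrying a colour of odd multiplicity, while $c^\ast$ remains an odd witness for $v_2$. Carrying out these parity checks (and confirming the few symmetric sub-cases) is the only genuinely computational part of the argument, and it is entirely routine.
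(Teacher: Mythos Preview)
Your proof is correct and follows essentially the same strategy as the paper's: split into the three gadgets and use parity to secure the odd condition at $v_2$ and at the even-degree interior vertices of $B$. The only difference is in execution: the paper fixes $\varphi(u_0)$ and then makes a sequence of binary choices (``choose $\varphi(u_i)$ from a two-element set so that a specified target vertex gets an odd witness''), whereas you first name an existing odd witness $c^\ast$ for $v_2$ in $G'$ and then write down explicit colourings, cased on $c^\ast$, that preserve it; both realisations of the parity argument are equally valid.
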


\begin{proof}
  Let $\varphi'$ be an odd 4-coloring of $G'$.
  Without loss of generality, we may assume that $\varphi'(v_1)=1$ and $\varphi'(v_2)=2$.
  For each vertex $v\in V(G')$, let $\varphi(v)=\varphi'(v)$.
  If $B$ is an ear, then we choose $\varphi(u_0)\in \{3,4\}$ so that $v_2$ satisfies the odd condition.
  If $B$ is a hat, then let $V(B)=\{v_1,v_2,u_0,u_1,u_2\}$ such that $N_G(u_0)=\{v_1,v_2,u_1,u_2\}$, $N_G(u_1)=\{v_1,u_0\}$ and $N_G(u_2)=\{v_2,u_0\}$.
  We let $\varphi(u_0)=3$, choose $\varphi(u_2)\in \{1,4\}$ so that $v_2$ satisfies the odd condition, and choose $\varphi(u_1)\in\{2,4\}$ so that $u_0$ satisfies the odd condition.
  If $B$ is a double hat, then let $V(B)=\{v_1,v_2\}\cup \{u_i\mid 0\leq i\leq 6\}$ such that $N_G(u_0)=\{v_1,v_2,u_1,u_2,u_4,u_5\}$, $N_G(u_1)=\{v_1,u_0,u_3,u_4\}$, $N_G(u_2)=\{v_2,u_0,u_5,u_6\}$, $N_G(u_3)=\{v_1,u_1\}$, $N_G(u_4)=\{u_0,u_1\}$, $N_G(u_5)=\{u_0,u_2\}$, and $N_G(u_6)=\{v_2,u_2\}$.
  We first let $\varphi(u_0)=3$ and $\varphi(u_1)=\varphi(u_2)=4$.
  Then we choose $\varphi(u_6)\in\{1,3\}$ so that $v_2$ satisfies the odd condition, choose $\varphi(u_5)\in\{1,2\}$ so that $u_2$ satisfies the odd condition, choose $\varphi(u_4)\in \{1,2\}$ so that $u_0$ satisfies the odd condition, and choose $\varphi(u_3)\in \{2,3\}$ so that $u_1$ satisfies the odd condition.
  For each case, it is easy to verify that $\varphi$ is a desired coloring of $G$.
\end{proof}

\subsection{Proof of Theorem \ref{thm:2tree}}

\setcounter{case}{0}

Let $G$ be a 2-tree of order $n$.
The proof goes by induction on $n$.
If $n\leq 4$, then trivially $G$ is odd $4$-colorable.
Hence, we assume that $n\geq 5$ and every 2-tree of order less than $n$ is odd $4$-colorable.
Note that every vertex of $G$ of degree $2$ satisfies the odd condition with respect to any proper coloring of $G$ by Observation~\ref{ob:small deg vtx}.

By Lemma~\ref{lem:odd 2tree unavoidable}, there is a subgraph $H\in\mathcal{H}^{(2)}(G)\cup\mathcal{T}^{(2)}(G)$ which is neither a hat nor a double hat of $G$.
We show that $H$ is a reducible structure with respect to odd 4-coloring.
In order to deal with some easy cases together, we first consider the case we can find a good hat $H'$ in $H$.

\begin{case}\label{case:odd 2tree good hat}
  $G$ has a good hat $H'$. 
\end{case}

Let $H'$ be a hat with $5$ vertices $\{v_1,v_2,u_0,u_1,u_2\}$ such that $\{v_1,v_2\}$ is the root of $H'$, $N_G(u_0)=\{v_1,v_2,u_1,u_2\}$, $N_G(u_1)=\{v_1,u_0\}$, $N_G(u_2)=\{v_2,u_0\}$, and $d_G(v_1)$ is equal to $4$ or an odd integer.
Let $G'=G-\{u_0,u_1,u_2\}$.
By the induction hypothesis, $G'$ admits an odd 4-coloring $\varphi$.
Without loss of generality, we may assume that $\varphi(v_1)=1$ and $\varphi(v_2)=2$.
If $d_G(v_1)$ is odd, then we may extend $\varphi$ to a proper 4-coloring of $G$ such that every vertex other than $v_1$ satisfies the odd condition by Lemma~\ref{lem:odd 2tree nearodd}, and $v_1$ satisfies the odd condition by Observation~\ref{ob:odd deg vtx}.
Hence, we assume that $d_G(v_1)=4$.
We let $\varphi(u_0)=3$, $\varphi(u_1)=4$, and choose a color in $\{1,4\}$ as $\varphi(u_2)$ so that $v_2$ satisfies the odd condition.
By the choice of colors, every vertex other than $\{v_1,u_0\}$ satisfies the odd condition.
Since $d_G(v_1)=4$ and the three vertices $v_2,u_0,u_1\in N_G(v_1)$ receive three distinct colors, $v_1$ satisfies the odd condition with respect to $\varphi$.
Similarly, since $d_G(u_0)=4$ and $v_1,v_2,u_1\in N_G(u_0)$, $u_0$ satisfies the odd condition with respect to $\varphi$.
Thus, $\varphi$ is an odd 4-coloring of $G$, as desired.

\vspace{\baselineskip}
If $H\in \mathcal{H}^{(2)}_{0,1,0}(G)\cup \mathcal{H}^{(2)}_{1,1,0}(G)\cup\mathcal{H}^{(2)}_{0,0,1}(G)\cup\mathcal{H}^{(2)}_{0,1,1}(G)$, then $H$ contains a good hat of $G$.
Thus, by Case~\ref{case:odd 2tree good hat}, we may assume that $H\notin \mathcal{H}^{(2)}_{0,1,0}(G)\cup \mathcal{H}^{(2)}_{1,1,0}(G)\cup\mathcal{H}^{(2)}_{0,0,1}(G)\cup\mathcal{H}^{(2)}_{0,1,1}(G)$.

In each of the following cases, let $G'=G-(V(H)\setminus\{v_1,v_2\})$, where $\{v_1,v_2\}$ is the root of $H$.
By the induction hypothesis, $G'$ admits an odd 4-coloring $\varphi$.
Without loss of generality, we may assume that $\varphi(v_1)=1$ and $\varphi(v_2)=2$.
We extend $\varphi$ to $G$ as in the following five cases.

\begin{case}\label{case:odd 2tree h100}
  $H\in\mathcal{H}^{(2)}_{1,0,0}(G)$.
\end{case}

Let $H=G[\{v_1,v_2,u_0,u_1\}]$ such that $N_G(u_0)=\{v_1,v_2,u_1\}$ and $N_G(u_1)=\{v_1,u_0\}$.
We choose $\varphi(u_0)\in \{3,4\}$ so that $v_2$ satisfies the odd condition, and then choose $\varphi(u_1)\in\{2,3,4\}\setminus\{\varphi(u_0)\}$ so that $v_1$ satisfies the odd condition.
It is easy to verify that $\varphi$ is an odd 4-coloring of $G$.

\begin{case}\label{case:odd 2tree h101}
  $H\in\mathcal{H}^{(2)}_{1,0,1}(G)\cup \mathcal{H}^{(2)}_{0,0,2}(G)$.
\end{case}

Let $H=B(\{v_1,v_2\},u_0)$ such that $B(\{v_1,u_0\},u_1)$ is a double hat and $B(\{v_2,u_0\},u_2)$ is either an ear or a double hat (Figure~\ref{fig:odd 2tree h101}).
Let $V(B(\{v_1,u_0\},u_1))=\{v_1,u_0,u_1,u_3,u_4,u_5,u_6,u_7,u_8\}$ such that $N_G(u_1)=\{v_1,u_0,u_3,u_4,u_6,u_7\}$, $N_G(u_3)=\{v_1,u_1,u_5,u_6\}$, and $N_G(u_4)=\{u_0,u_1,u_7,u_8\}$.
Let $G'=G-V(H)\setminus\{v_1,v_2\}$.
We first let $\varphi(u_1)=2$, $\varphi(u_3)=3$, $\varphi(u_5)=4$, and choose $\varphi(u_0)\in \{3,4\}$ so that $v_1$ satisfies the odd condition.
Then we sequentially use Lemma~\ref{lem:odd 2tree nearodd} to $B(\{u_0,v_2\},u_2)$, $B(\{u_1,u_0\},u_4)$, and $B(\{u_3,u_1\},u_6)$ to obtain a proper 4-coloring of $G$ such that every vertex other than $u_3$ satisfies the odd condition.
Since $d_G(u_3)=4$ and three colors appear in the neighbors of $u_3$, $u_3$ also satisfies the odd condition.
Thus, $\varphi$ is an odd 4-coloring of $G$.

\begin{figure}
  \centering
  \begin{tikzpicture}[roundnode/.style={circle, draw=black,fill=white, minimum size=1.5mm, inner sep=0pt}]
    \node [roundnode] (v1) at (-4,0){};
    \node [roundnode] (v2) at (3,0){};
    \node [roundnode] (u0) at (0,0){};
    \node [roundnode] (u1) at (-2,0){};
    \node [roundnode] (u3) at (-3,1){};
    \node [roundnode] (u4) at (-1,1){};
    \node [roundnode] (u5) at (-3.5,1){};
    \node [roundnode] (u6) at (-2.5,1){};
    \node [roundnode] (u7) at (-1.5,1){};
    \node [roundnode] (u8) at (-0.5,1){};

    \draw (v1)--(u5)--(u3)--(v1)--(u1)--(u6)--(u3)--(u1)--(u7)--(u4)--(u1)--(u0)--(u8)--(u4)--(u0)--(v2);
    \draw (v1)..controls (-3,-0.6) and (-1,-0.6)..(u0);
    \draw (v1)..controls (-2.6,-1) and (0,-1)..(v2);
    \filldraw[fill=lightgray] (u0)..controls (1,0.5) and (2,0.5) ..(v2);
    \draw (v1)--(-4.5,0);
    \draw (v2)--(3.5,0);

    \node at (-4,-0.3){$v_1$};
    \node at (3,-0.3){$v_2$};
    \node at (0,-0.3){$u_0$};
    \node at (-2,-0.3){$u_1$};
    \node at (1.5,0.8){$B(\{v_2,u_0\},u_2)$};
    \node at (-3,1.3){$u_3$};
    \node at (-1,1.3){$u_4$};
    \node at (-3.5,1.3){$u_5$};
    \node at (-2.5,1.3){$u_6$};
    \node at (-1.5,1.3){$u_7$};
    \node at (-0.5,1.3){$u_8$};
  \end{tikzpicture}
  \caption{Case \ref{case:odd 2tree h101}}
  \label{fig:odd 2tree h101}
\end{figure}

\begin{case}\label{case:odd 2tree t200}
  $H\in\mathcal{T}^{(2)}_{2,0,0}(G)$.
\end{case}

Let $V(H)=\{v_1,v_2,u_0,w_0\}$ such that $N_G(u_0)=N_G(w_0)=\{v_1,v_2\}$.
By setting $\varphi(u_0)=\varphi(w_0)=3$, the parity of the number of neighbors with each color around $v_1$ and $v_2$ does not change.
Thus, $\varphi$ is an odd 4-coloring of $G$.

\begin{case}\label{case:odd 2tree t010}
  $H\in\mathcal{T}^{(2)}_{a,b,c}(G)$ for some tuple $(a,b,c)$ with $a+b+c=2$ and $b\geq 1$.
\end{case}

Let $B_1=B(\{v_1,v_2\},u_0)$ and $B_2=B(\{v_1,v_2\},w_0)$ be branches such that $V(H)=V(B_1)\cup V(B_2)$ and $B_1$ is a hat of $G$.
Let $V(B_1)=\{v_1,v_2,u_0,u_1,u_2\}$ such that $N_G(u_0)=\{v_1,v_2,u_1,u_2\}$, $N_G(u_1)=\{v_1,u_0\}$, and $N_G(u_2)=\{v_2,u_0\}$.
We first let $\varphi(u_0)=3$ and $\varphi(u_1)=4$.
By applying Lemma~\ref{lem:odd 2tree nearodd}, we choose colors for $V(B_2)\setminus\{v_1,v_2\}$ so that every vertex of $V(B_2)\setminus\{v_2\}$ satisfies the odd condition, and choose $\varphi(u_2)\in \{1,4\}$ so that $v_2$ satisfies the odd condition.
By the choice of colors, every vertex other than $u_0$ satisfies the odd condition.
Since $d_G(u_0)=4$ and three colors appear in the neighbors of $u_0$, $u_0$ satisfies the odd condition and thus $\varphi$ is an odd 4-coloring of $G$.

\begin{case}\label{case:odd 2tree t001}
  $H\in\mathcal{T}^{(2)}_{a,b,c}(G)$ for some tuple $(a,b,c)$ with $a+b+c=2$ and $c\geq 1$.
\end{case}

Let $B_1=B(\{v_1,v_2\},u_0)$ and $B_2=B(\{v_1,v_2\},w_0)$ be branches such that $V(H)=V(B_1)\cup V(B_2)$ and $B_1$ is a double hat of $G$.
Let $V(B_1)=\{v_1,v_2\}\cup\{u_i\mid 0\leq i\leq 6\}$ such that $N_G(u_0)=\{v_1,v_2,u_1,u_2,u_4,u_5\}$, $N_G(u_1)=\{v_1,u_0,u_3,u_4\}$, $N_G(u_2)=\{v_2,u_0,u_5,u_6\}$, $N_G(u_3)=\{v_1,u_1\}$, $N_G(u_4)=\{u_0,u_1\}$, $N_G(u_5)=\{u_0,u_2\}$, and $N_G(u_6)=\{v_2,u_2\}$.
We first let $\varphi(u_0)=3$, $\varphi(u_1)=4$, and $\varphi(u_3)=2$.
By applying Lemma~\ref{lem:odd 2tree nearodd} to $B_2$ and $B(\{u_0,v_2\},u_2)$, we can color $V(H)\setminus\{v_1,v_2,u_4\}$ so that every vertex other than $u_0$ and $u_1$ satisfies the odd condition, and finally we choose $\varphi(u_4)\in\{1,2\}$ so that $u_0$ satisfies the odd condition.
Since $d_G(u_1)=4$ and three colors appear in the neighbors of $u_1$, $u_1$ satisfies the odd condition.
Thus, $\varphi$ is an odd 4-coloring of $G$.
This completes the proof of Theorem~\ref{thm:2tree}.

\section*{Acknowledgement}
This work partially depends on discussion while the authors visited Zhejiang Normal University in China. The authors thank Xuding Zhu for the invitation and valuable comments.
Masaki Kashima is supported by JSPS KAKENHI Grant Number 25KJ2077.
Kenta Ozeki is supported by JSPS KAKENHI, Grant Numbers 19H01803, 20H5795, and 22K19773, 23K03195.

\newpage
\section*{Appendix}

We give a proof of Theorem~\ref{thm:3tree}. 

\subsection{Some special branches}

We define some special branches of a 3-tree.
Let $G$ be a 3-tree.

\begin{itemize}
  \item An \emph{ear} of $G$ is a branch $B(\{v_1,v_2,v_3\},u_0)$ of $G$ with $4$ vertices $\{v_1,v_2,v_3,u_0\}$ such that $N_G(u_0)=\{v_1,v_2,v_3\}$.
  \item A \emph{one-hat} of $G$ is a branch $B(\{v_1,v_2,v_3\},u_0)$ of $G$ with $5$ vertices $\{v_1,v_2,v_3,u_0,u_1\}$ such that $N_G(u_0)=\{v_1,v_2,v_3,u_1\}$ and $N_G(u_1)=\{v_1,v_2,u_0\}$ (Figure~\ref{fig:odd 3tree 1hat}).
  \item A \emph{one-hat plus} of $G$ is a branch $B(\{v_1,v_2,v_3\},u_0)$ of $G$ with $7$ vertices $\{v_1,v_2,v_3\}\cup\{u_i\mid 0\leq i\leq 3\}$ such that $N_G(u_0)=\{v_1,v_2,v_3,u_1,u_2,u_3\}$, $N_G(u_1)=\{v_1,v_2,u_0,u_3\}$, $N_G(u_2)=\{v_2,v_3,u_0\}$ and $N_G(u_3)=\{v_1,u_0,u_1\}$ (Figure~\ref{fig:odd 3tree 1hat plus}).
\end{itemize}

\begin{figure}[h]
  \centering
  \begin{minipage}{0.45\columnwidth}
    \centering
    \begin{tikzpicture}[roundnode/.style={circle, draw=black,fill=white, minimum size=1.5mm, inner sep=0pt}]
      \node [roundnode] (v1) at (90:2.4){};
      \node [roundnode] (v2) at (210:2.4){};
      \node [roundnode] (v3) at (330:2.4){};
      \node [roundnode] (u0) at (0:0){};
      \node [roundnode] (u1) at (150:0.6){};

      \foreach \x in {1,2,3} \draw (u0)--(v\x);
      \draw (v1)--(v2)--(v3)--(v1);
      \foreach \x in {1,2} \draw (u1)--(v\x);
      \draw (u0)--(u1);
      \draw (v1)--(90:2.7);
      \draw (v2)--(210:2.7);
      \draw (v3)--(330:2.7);

      \node at (97:2.4){$v_1$};
      \node at (217:2.4){$v_2$};
      \node at (323:2.4){$v_3$};
      \node at (30:0.3){$u_0$};
      \node at (150:0.9){$u_1$};
    \end{tikzpicture}
    \caption{A one-hat of a 3-tree.}
    \label{fig:odd 3tree 1hat}
  \end{minipage}
  \begin{minipage}{0.45\columnwidth}
    \centering
    \begin{tikzpicture}[roundnode/.style={circle, draw=black,fill=white, minimum size=1.5mm, inner sep=0pt}]
      \node [roundnode] (v1) at (90:2.4){};
      \node [roundnode] (v2) at (210:2.4){};
      \node [roundnode] (v3) at (330:2.4){};
      \node [roundnode] (u0) at (0:0){};
      \node [roundnode] (u1) at (180:0.9){};
      \node [roundnode] (u2) at (270:0.6){};
      \node [roundnode] (u3) at (120:0.7){};

      \foreach \x in {1,2,3}{\draw (u0)--(v\x);
      \draw (u0)--(u\x);}
      \draw (v1)--(v2)--(v3)--(v1);
      \foreach \x in {1,2} \draw (u1)--(v\x);
      \foreach \x in {0,1} \draw (u3)--(u\x);
      \draw (u3)--(v1);
      \foreach \x in {2,3} \draw (u2)--(v\x);
      \draw (v1)--(90:2.7);
      \draw (v2)--(210:2.7);
      \draw (v3)--(330:2.7);

      \node at (97:2.4){$v_1$};
      \node at (217:2.4){$v_2$};
      \node at (323:2.4){$v_3$};
      \node at (30:0.3){$u_0$};
      \node at (197:0.9){$u_1$};
      \node at (148:0.45){$u_3$};
      \node at (270:0.9){$u_2$};
    \end{tikzpicture}
    \caption{A one-hat plus of a 3-tree.}
    \label{fig:odd 3tree 1hat plus}
  \end{minipage}
\end{figure}

For a 3-tree $G$, we define two sets $\mathcal{H}^{(3)}(G)$ and $\mathcal{T}^{(3)}(G)$ of subgraphs as follows.

Let $(a,b,c)$ be a tuple of non-negative integers with $1\leq a+b+c\leq 3$.
For a clique $\{v_1,v_2,v_3,u_0\}$ of $G$, the branch $B(\{v_1,v_2,v_3\},u_0)$ belongs to $\mathcal{H}^{(3)}_{a,b,c}(G)$ if
\begin{itemize}
    \item $|N_G(u_0)\cap N_G(v_1)\cap N_G(v_2)|\leq 2$, and if the equality holds, then there is a vertex $u_1$ such that $N_G(u_0)\cap N_G(v_1)\cap N_G(u_2)=\{v_3,u_1\}$ and $B(\{v_1,v_2,u_0\},u_1)$ is either an ear, a one-hat or a one-hat plus of $G$,
    \item $|N_G(u_0)\cap N_G(v_2)\cap N_G(v_3)|\leq 2$, and if the equality holds, then there is a vertex $u_2$ such that $N_G(u_0)\cap N_G(v_2)\cap N_G(u_3)=\{v_1,u_2\}$ and $B(\{v_2,v_3,u_0\},u_2)$ is either an ear, a one-hat or a one-hat plus of $G$,
    \item $|N_G(u_0)\cap N_G(v_3)\cap N_G(v_1)|\leq 2$, and if the equality holds, then there is a vertex $u_3$ such that $N_G(u_0)\cap N_G(v_3)\cap N_G(u_1)=\{v_2,u_3\}$ and $B(\{v_3,v_1,u_0\},u_3)$ is either an ear, a one-hat or a one-hat plus of $G$, and 
    \item $\{B(\{v_1,v_2,u_0\},u_1), B(\{v_2,v_3,u_0\},u_2), B(\{v_3,v_1,u_0\},u_3)\}$ consists of $a$ ears, $b$ one-hats and $c$ one-hat pluses.
\end{itemize}

Let $(a,b,c)$ be a tuple of non-negative integers with $a+b+c=2$.
For two cliques $\{v_1,v_2,v_3,u_0\}$ and $\{v_1,v_2,v_3,w_0\}$ sharing $\{v_1,v_2,v_3\}$, the subgraph of $G$ induced by $V(B(\{v_1,v_2,v_3\},u_0))\cup V(B(\{v_1,v_2,v_3\},w_0))$ belongs to $\mathcal{T}^{(3)}_{a,b,c}(G)$ if
\begin{itemize}
  \item each of $B(\{v_1,v_2,v_3\},u_0)$ and $B(\{v_1,v_2,v_3\},w_0)$ is either an ear, a one-hat, or a one-hat plus of $G$, and 
  \item $\{B(\{v_1,v_2,v_3\},u_0), B(\{v_1,v_2,v_3\},w_0)\}$ consists of $a$ ears, $b$ one-hats and $c$ one-hat pluses.
\end{itemize}
For a graph $T\in\mathcal{T}^{(3)}_{a,b,c}(G)$ induced by $V(B(\{v_1,v_2,v_3\},u_0))\cup V(B(\{v_1,v_2,v_3\},w_0))$, the set of vertices $\{v_1,v_2,v_3\}$ is called the \emph{root} of $T$.

Let $\mathcal{H}^{(3)}(G)=\bigcup_{1\leq a+b+c\leq 3}\mathcal{H}^{(3)}_{a,b,c}(G)$ and let $\mathcal{T}^{(3)}(G)=\bigcup_{a+b+c=2}\mathcal{T}^{(3)}_{a,b,c}(G)$.
Note that a one-hat of $G$ belongs to $\mathcal{H}^{(3)}_{1,0,0}(G)$ and a one-hat plus of $G$ belongs to $\mathcal{H}^{(3)}_{1,1,0}(G)$.

The following lemma holds.

\begin{lemma}\label{lem:3tree unavoidable}
  Let $G$ be a 3-tree with at least $5$ vertices. 
  Then there is a subgraph $H\in\mathcal{H}^{(3)}(G)\cup\mathcal{T}^{(3)}(G)$ which is neither a one-hat nor a one-hat plus of $G$. 
\end{lemma}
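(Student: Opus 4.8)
The plan is to mimic the structure of the proof of Lemma~\ref{lem:odd 2tree unavoidable}, peeling off low-degree vertices in successive layers and arguing that whatever remains must contain one of the allowed configurations. Concretely, I would set $V_0$ to be the set of vertices of degree $3$ in $G$ (the minimum possible degree in a $3$-tree with at least $5$ vertices), and then for $i=1,2,3,4$ define $V_i$ to be the set of vertices whose degree in $G-\bigcup_{j<i}V_j$ equals $3$, unless only two or three vertices remain, in which case $V_i$ absorbs all of them. Since $|V(G)|\geq 5$ forces $V_1\neq\emptyset$, and since each $G-\bigcup_{j<i}V_j$ is again a $3$-tree (or a $K_3$/$K_4$) whenever it has at least $4$ vertices, this layering is well defined. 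The idea is that a vertex $v_i\in V_i$ that has not been forced into a one-hat or one-hat plus at an earlier layer will witness a branch $B(\{x_i,y_i,z_i\},v_i)\in\mathcal{H}^{(3)}(G)$, where $\{x_i,y_i,z_i\}$ is the clique of three higher-layer neighbors of $v_i$, and by counting how many of the three ``side'' branches hanging off $u_0=v_i$ toward each pair are ears, one-hats, or one-hat pluses, we land in one of the listed $\mathcal{H}^{(3)}_{a,b,c}$ families.

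First I would dispose of the degenerate cases where a whole layer exhausts the graph: if $|V(G)\setminus\bigcup_{j<i}V_j|=2$ or $3$, the remaining vertices form the root of a subgraph in the appropriate $\mathcal{T}^{(3)}_{a,b,c}(G)$, built from the two branches hanging on that root; and if the process terminates exactly when $\bigcup_{j\leq i}V_j=V(G)$ with no vertices left over, $G$ is one of a small number of explicit symmetric graphs (the $3$-tree analogues of Figures~\ref{fig:2tree unavoidable1} and \ref{fig:2tree unavoidable2}), each of which visibly contains a suitable $\mathcal{H}^{(3)}_{a,b,c}(G)$ member that is not a one-hat or one-hat plus. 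The core of the argument is then the generic step: pick $v_i\in V_i$, look at $N_G(v_i)\setminus\bigcup_{j<i}V_j$, which is a clique of size exactly $3$ by $k$-degeneracy, call it $\{x_i,y_i,z_i\}$, and consider the branch $B_i=B(\{x_i,y_i,z_i\},v_i)$. The lower-layer vertices attached to $v_i$ within $B_i$ hang over one of the three pairs of $\{x_i,y_i,z_i\}$; using the analogue of Lemma~\ref{lem:ktree branch order}(c) (the ``drop by at most one'' inequality for the intersection of neighborhoods), I would argue that over each pair there can be at most one such local branch, that each such local branch is itself an ear, a one-hat, or a one-hat plus (because its vertices lie in earlier layers and have already been shown to be confined to such structures — or have degree $3$), and hence $B_i\in\mathcal{H}^{(3)}_{a,b,c}(G)$ for the corresponding $(a,b,c)$ with $1\leq a+b+c\leq 3$. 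If at any layer we find that $B_i$ is not a one-hat or a one-hat plus, we are done; otherwise we push to the next layer.

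The delicate bookkeeping — and what I expect to be the main obstacle — is the case $a+b+c=2$ or $3$ where two lower-layer branches over the \emph{same} pair would be needed: we must show this cannot happen with two \emph{distinct} copies (as in the $2$-tree proof's repeated ``$N_G(u)\cap\cdots=N_G(w)\cap\cdots$'' clause giving a $\mathcal{T}^{(3)}$ instead), and, dually, must verify that when two branches do hang off the \emph{same root clique} $\{v_1,v_2,v_3\}$ they assemble into a legal $\mathcal{T}^{(3)}_{a,b,c}(G)$ member. Because a $3$-tree has three pairs through each vertex rather than the single ``other side'' available in a $2$-tree, the number of $(a,b,c)$ sub-cases roughly triples, and one must check that the definition of $\mathcal{H}^{(3)}_{a,b,c}(G)$ — in particular the three $|N_G(u_0)\cap N_G(v_i)\cap N_G(v_{i+1})|\leq 2$ conditions — is actually met by the branch produced at each layer, rather than merely that the local branches have the right shape. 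I would handle this exactly as in Lemma~\ref{lem:odd 2tree unavoidable}: at each layer, either we exhibit two distinct lower-layer vertices with identical ``upper trace'' on the same pair (yielding a $\mathcal{T}^{(3)}$ subgraph and finishing), or we conclude the relevant intersection has size $1$ for every pair, which is exactly the inequality needed to place $B_i$ in $\mathcal{H}^{(3)}(G)$. Since by induction every vertex of $V_1$ is confined to a one-hat, every vertex of $V_2$ to a one-hat plus, and the $V_3$ layer then forces one of $\mathcal{H}^{(3)}_{a,b,c}(G)$ with $a+b+c\geq 1$ and $(b,c)\neq$ the one-hat/one-hat-plus signatures, the process must terminate with a valid $H$ before reaching $V_4$, completing the proof.
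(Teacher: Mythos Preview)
Your approach is essentially the paper's: peel off layers $V_0,V_1,V_2,V_3$ by iterated minimum degree, and at each layer either find two lower-layer branches sharing the same root triangle (giving an element of $\mathcal{T}^{(3)}(G)$) or conclude that the branch $B(\{x_i,y_i,z_i\},v_i)$ lies in $\mathcal{H}^{(3)}(G)$ with sub-branches forced by the earlier layers to be ears, one-hats, or one-hat pluses; since any layer-$3$ branch has $c\geq 1$ it can never carry the one-hat signature $(1,0,0)$ or the one-hat-plus signature $(1,1,0)$, so three layers suffice and your $V_4$ is not needed. One correction: Lemma~\ref{lem:ktree branch order}(c) is a red herring here --- it governs an addition ordering \emph{inside} a fixed branch, not the multiplicity of sub-branches over a given pair --- and the argument you actually need (and correctly state in your last paragraph) is exactly the ``two branches on the same triangle $\Rightarrow\mathcal{T}^{(3)}$, otherwise each intersection has size $\leq 1$'' dichotomy, which is all the paper uses.
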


\begin{proof}
    Let $V_0$ be the set of vertices of degree $3$ of $G$.
    For $i=1,2,3$, we define a set of vertices $V_i$ as follows: 
    If $|V(G)\setminus\bigcup_{j=0}^{i-1}V_j|\geq 4$, then let $V_i$ be the set of vertices of $G-\bigcup_{j=0}^{i-1}V_j$ whose degrees in $G-\bigcup_{j=0}^{i-1}V_j$ are equal to $3$.
    If $|V(G)\setminus\bigcup_{j=0}^{i-1}V_j|=3$, then let $V_i=V(G)\setminus\bigcup_{j=0}^{i-1}V_j$.
    Otherwise, let $V_i=\emptyset$.
    By the definition, $V_0$, $V_1$, $V_2$, and $V_3$ are pairwise disjoint.
    
    The assumption $|V(G)|\geq 5$ forces $V_1\neq \emptyset$.
    If $|V(G)\setminus V_0|=3$, then the three vertices in $V(G)\setminus V_0$ form a root of a subgraph in $\mathcal{T}^{(3)}_{2,0,0}(G)$.
    Thus, we assume that $|V(G)\setminus V_0|\geq 4$.
    Let $v_1$ be a vertex in $V_1$ and let $N_G(v_1)\setminus V_0=\{x_1,y_1,z_1\}$ such that $|N_G(x_1)\cap N_G(y_1)\cap N_G(v_1)\cap V_0|\geq |N_G(y_1)\cap N_G(z_1)\cap N_G(v_1)\cap V_0|\geq |N_G(z_1)\cap N_G(x_1)\cap N_G(v_1)\cap V_0|$.
    If there are two distinct vertices $u,w\in V_0$ such that $N_G(u)=N_G(w)=\{v_1,x_1,y_1\}$, then $B(\{v_1,x_1,y_1\},u)\cup B(\{v_1,x_1,y_1\},w)\in \mathcal{T}^{(3)}_{2,0,0}(G)$.
    Thus, we may assume that $|N_G(x_1)\cap N_G(y_1)\cap N_G(v_1)\cap V_0|=1$, which implies that $B(\{x_1,y_1,z_1\},v_1)\in \mathcal{H}^{(3)}_{a,0,0}(G)$ for some $a\in\{1,2,3\}$.
    If $B(\{x_1,y_1,z_1\},v_1)$ is not a one-hat of $G$, then we are done.
    Thus, since the choice of $v_1\in V_1$ is arbitrary, we may assume that every vertex of $V_1$ is contained in a one-hat of $G$.

    Since every vertex of $V_1$ belongs to a one-hat of $G$, if $V(G)=V_0\cup V_1$, then $G$ is isomorphic to a graph obtained from $K_4$ by adding one vertex of degree $3$ to a triangle, and hence $\mathcal{T}^{(3)}_{2,0,0}(G)\neq \emptyset$.
    Suppose that $V_2\neq \emptyset$.
    If $|V(G)\setminus (V_0\cup V_1)|=3$, then the three vertices in $V(G)\setminus (V_0\cup V_1)$ form a root of a subgraph in $\mathcal{T}^{(3)}_{0,2,0}(G)$.
    Thus, we assume that $|V(G)\setminus (V_0\cup V_1)|\geq 4$.
    Let $v_2$ be a vertex of $V_2$ and let $N_G(v_2)\setminus(V_0\cup V_1)=\{x_2,y_2,z_2\}$ such that $|N_G(x_2)\cap N_G(y_2)\cap N_G(v_2)\cap (V_0\cup V_1)|\geq |N_G(y_2)\cap N_G(z_2)\cap N_G(v_2)\cap (V_0\cup V_1)|\geq |N_G(z_2)\cap N_G(x_2)\cap N_G(v_2)\cap (V_0\cup V_1)|$.
    If there are two distinct vertices $u,w\in V_0\cup V_1$ such that $N_G(u)\setminus (V_0\cup V_1)=N_G(w)\setminus (V_0\cup V_1)=\{v_2,x_2,y_2\}$, then $B(\{v_2,x_2,y_2\},u)\cup B(\{v_2,x_2,_2\},w)\in \mathcal{T}^{(3)}(G)$.
    Thus, we may assume that $|N_G(x_2)\cap N_G(y_2)\cap N_G(v_2)\cap (V_0\cup V_1)|=1$, which implies that $B(\{x_2,y_2,z_2\},v_2)\in\mathcal{H}^{(3)}_{a,b,0}(G)$ for some $a\geq 0$ and $b\geq 1$ with $a+b\leq 3$.
    If $B(\{x_2,y_2,z_2\},v_2)$ is not a one-hat plus of $G$, then we are done.
    Thus, since the choice of $v_2\in V_2$ is arbitrary, we may assume that $V_2\neq \emptyset$ and every vertex of $V_2$ is contained in a one-hat plus of $G$.
    Then it is easy to verify that $V_3\neq \emptyset$.

    If $|V(G)\setminus (V_0\cup V_1\cup V_2)|=3$, then the three vertices in $V(G)\setminus (V_0\cup V_1\cup V_2)$ form a root of a subgraph in $\mathcal{T}^{(3)}_{0,0,2}(G)$.
    Thus, we assume that $|V(G)\setminus (V_0\cup V_1\cup V_2)|\geq 4$.
    Let $v_3\in V_3$ and let $N_G(v_3)\setminus (V_0\cup V_1\cup V_2)=\{x_3,y_3,z_3\}$ such that $|N_G(x_3)\cap N_G(y_3)\cap N_G(v_3)\cap (V_0\cup V_1\cup V_2)|\geq |N_G(y_3)\cap N_G(z_3)\cap N_G(v_3)\cap (V_0\cup V_1\cup V_2)|\geq |N_G(z_3)\cap N_G(x_3)\cap N_G(v_3)\cap (V_0\cup V_1\cup V_2)|$.
    If there are two distinct vertices $u,w\in V_0\cup V_1\cup V_2$ such that $N_G(u)\setminus (V_0\cup V_1\cup V_2)=N_G(w)\setminus (V_0\cup V_1\cup V_2)=\{v_3,x_3,y_3\}$, then $B(\{v_3,x_3,y_3\},u)\cup B(\{v_3,x_3,y_3\},w)\in \mathcal{T}^{(3)}(G)$.
    Thus, we may assume that $|N_G(x_3)\cap N_G(y_3)\cap N_G(v_3)\cap (V_0\cup V_1\cup V_2)|=1$, which implies that $B(\{x_3,y_3,z_3\},v_3)\in\mathcal{H}^{(3)}_{a,b,c}(G)$ for some $a\geq 0$, $b\geq 0$ and $c\geq 1$ with $a+b+c\leq 3$.
    This completes the proof of Lemma~\ref{lem:3tree unavoidable}.
\end{proof}

The following lemma is used repeatedly in our proof of Theorem~\ref{thm:3tree}.

\begin{lemma}\label{lem:3tree nearodd}
  Let $G$ be a 3-tree, and suppose that $G$ has a branch $B=B(\{v_1,v_2,v_3\},u_0)$ which is either an ear, a one-hat, or a one-hat plus of $G$.
  Suppose that $G-(V(B)\setminus\{v_1,v_2,v_3\})$ admits an odd 5-coloring $\varphi'$.
  For any index $i\in \{1,2,3\}$, $\varphi'$ can be extended to a proper 5-coloring of $G'\cup B$ such that every vertex of $V(G)\setminus\{v_i,v_{i+1}\}$ satisfies the odd condition, where $v_4=v_1$.
  In particular, when $B$ is either a one-hat or a one-hat plus, then $\varphi'$ can be extended to a proper 5-coloring of $G'\cup B$ such that every vertex of $V(G)\setminus\{v_i\}$ satisfies the odd condition.
\end{lemma}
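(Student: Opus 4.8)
The plan is to follow the proof of Lemma~\ref{lem:odd 2tree nearodd}. Normalise $\varphi'$ so that $\varphi'(v_j)=j$ for $j\in\{1,2,3\}$, keep $\varphi=\varphi'$ on $G'=G-(V(B)\setminus\{v_1,v_2,v_3\})$, and extend to the new vertices $u_0,u_1,\dots$ of $B$. Every vertex of $V(G')\setminus\{v_1,v_2,v_3\}$ keeps its entire neighbourhood in $G$, hence still satisfies the odd condition, so only $v_1,v_2,v_3$ and the new vertices of $B$ need to be checked. The engine of the argument is the following elementary observation: if, in a partial proper colouring, every neighbour of a vertex $x$ except one, $u$, has been coloured and $u$ still has at least two admissible colours, then one of these can be given to $u$ so that $x$ satisfies the odd condition (if all colour-counts at $x$ are currently even, any colour works; otherwise use an admissible colour distinct from some currently-odd colour of $x$). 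I will call $u$ the \emph{free last neighbour} of $x$.

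For an ear, $u_0$ has degree $3$ and satisfies the odd condition by Observation~\ref{ob:odd deg vtx}; since $\varphi(u_0)\in\{4,5\}$ are two admissible colours and $u_0$ is the free last neighbour of whichever single vertex of $\{v_1,v_2,v_3\}\setminus\{v_i,v_{i+1}\}$ must be handled, the ear case is immediate. For a one-hat, $u_1$ has odd degree and $u_0$ (degree $4$, neighbours coloured $1,2,3,\varphi(u_1)$) always satisfies the odd condition, since colour $1$ appears exactly once among its neighbours unless $\varphi(u_1)\in\{4,5\}$, in which case its four neighbour-colours are distinct. When $i\in\{1,2\}$, the vertex $v_3$ has $u_0$ as its unique new neighbour while the other required vertex has new neighbours $u_0,u_1$; colour $u_0$ then $u_1$ and apply the free-last-neighbour observation twice. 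When $i=3$, both $v_1$ and $v_2$ have exactly the new neighbours $u_0,u_1$; here I note that the unordered pair $\{\varphi(u_0),\varphi(u_1)\}$ can be made equal to any of $\{3,4\},\{3,5\},\{4,5\}$, i.e.\ I may toggle the parities of any two of the colours $3,4,5$ simultaneously at $v_1$ and at $v_2$, and a short check shows that each of $v_1,v_2$ is spoilt by at most one of these three toggles (colours $1,2$ are never toggled, and if neither is odd at $v_j$ then the odd colours of $v_j$ among $\{3,4,5\}$ form a nonempty set which must coincide with the toggled pair for $v_j$ to be spoilt), so some toggle works for both.

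For a one-hat plus the new vertices are $u_0$ (degree $6$), $u_1$ (degree $4$), and $u_2,u_3$ (degree $3$). The last two satisfy the odd condition by Observation~\ref{ob:odd deg vtx}, and $u_1$ always does as in the one-hat case (its neighbours $v_1,v_2$ carry the colours $1,2$; colour $1$ is odd at $u_1$ if $\varphi(u_3)=2$, and otherwise $u_1$'s four neighbour-colours are distinct). The only new point is $u_0$: once $\varphi(u_0)=4$ is fixed, its neighbours are $v_1,v_2,v_3$ coloured $1,2,3$ together with $u_1,u_2,u_3$, and since $u_1\not\sim v_3$, $u_2\not\sim v_1$, $u_3\not\sim v_2$, one checks directly that $u_0$ fails the odd condition precisely when $(\varphi(u_1),\varphi(u_2),\varphi(u_3))=(3,1,2)$, so this single configuration just has to be avoided. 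For $i\in\{2,3\}$ set $\varphi(u_0)=4$ and $\varphi(u_1)=5$ (which keeps $u_0$ safe); then the two required vertices among $v_1,v_2,v_3$ have distinct, privately owned free last neighbours ($u_3$ for $v_1$, and $u_2$ for $v_2$ or for $v_3$), so two applications of the observation finish. For $i=1$, the structure forces the new neighbours $\{u_0,u_2\}$ of $v_3$ to lie inside the new neighbours $\{u_0,u_1,u_2\}$ of $v_2$, so I instead colour in the order $u_0,u_2,u_1,u_3$ with $\varphi(u_0)=4$, using $u_2$ as the free last neighbour of $v_3$, then $u_1$ as the free last neighbour of $v_2$, and finally $u_3$ as the free last neighbour of $u_0$ (at that point $u_3$ is the only uncoloured neighbour of $u_0$ and still has two admissible colours).

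The hard part is organising the one-hat-plus case: $u_0$ has degree $6$, so it is not covered by Observation~\ref{ob:small deg vtx}, and for some $i$ the two vertices that must both be satisfied share all or almost all of their new neighbours with each other and with $u_0$; the remedy is to fix the colouring order of $u_0,u_1,u_2,u_3$ so that each still-unsatisfied vertex gets its own free last neighbour, while $u_0$ is made safe either by the free-last-neighbour observation or by avoiding the unique bad configuration $(3,1,2)$. Everything else reduces to the free-last-neighbour observation and Observation~\ref{ob:odd deg vtx}, so no long computation is needed.
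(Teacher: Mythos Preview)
Your proof is correct and follows essentially the same approach as the paper's: normalise the root colours, extend case by case according to whether $B$ is an ear, a one-hat, or a one-hat plus, and within each case split on the index $i$. Your ``free last neighbour'' observation is just an explicit packaging of what the paper does repeatedly without naming it (choose $\varphi(u)\in\{a,b\}$ so that a designated vertex satisfies the odd condition), and your one-hat $i=3$ argument via the three unordered pairs $\{3,4\},\{3,5\},\{4,5\}$ and a pigeonhole count is exactly the paper's $\varphi_3$ construction rephrased. The only noteworthy difference is in the one-hat-plus case: the paper reduces it to the one-hat lemma applied to the sub-branch $B(\{v_1,v_2,u_0\},u_1)$, whereas you colour $u_0,u_1,u_2,u_3$ directly and control $u_0$ either by setting $\varphi(u_1)=5$ upfront (for $i\in\{2,3\}$) or by reserving $u_3$ as its free last neighbour (for $i=1$), having first pinned down that the unique bad triple is $(\varphi(u_1),\varphi(u_2),\varphi(u_3))=(3,1,2)$. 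This direct route is slightly more self-contained but otherwise buys the same thing; the paper's recursive route is marginally shorter to write. Either way the argument is the same in substance.
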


\begin{proof}
    Let $G'=G-V(B)\setminus\{v_1,v_2,v_3\}$ and let $\varphi'$ be an odd 5-coloring of $G'$.
    Without loss of generality, we may assume that $\varphi'(v_i)=i$ for each $i\in \{1,2,3\}$.
    Let $\varphi(v)=\varphi'(v)$ for every vertex $v\in V(G')$, and we define $\varphi(u)$ for each $u\in V(B)\setminus\{v_1,v_2,v_3\}$ as follows.

    We first consider the case where $B$ is an ear of $G$.
    By the symmetry of $B$, it suffices to show that $\varphi'$ can be extended to a proper 5-coloring $\varphi$ of $G$ such that every vertex of $V(G)\setminus\{v_2,v_3\}$ satisfies the odd condition.
    We choose $\varphi(u_0)\in \{4,5\}$ so that $v_1$ satisfies the odd condition, and then it is easy to verify that every vertex of $V(G)\setminus\{v_2,v_3\}$ satisfies the odd condition.

    Next, we consider the case where $B$ is a one-hat of $G$.
    Let $V(B)=\{v_1,v_2,v_3,u_0,u_1\}$ such that $N_G(u_0)=\{v_1,v_2,v_3,u_1\}$ and $N_G(u_1)=\{v_1,v_2,u_0\}$.
    By the symmetry of $v_1$ and $v_2$, it suffices to show that the statement holds for the cases $i=1$ or $i=3$.
    We extend $\varphi'$ to a coloring $\varphi_1$ of $G$ by choosing $\varphi_1(u_0)\in\{4,5\}$ so that $v_3$ satisfies the odd condition and choosing $\varphi_1(u_1)\in\{3,4,5\}\setminus\{\varphi_1(u_0)\}$ so that $v_2$ satisfies the odd condition.
    We extend $\varphi'$ to a coloring $\varphi_3$ of $G$ by letting $(\varphi_3(u_0),\varphi_3(u_1))\in\{(4,1),(4,5),(5,1)\}$ so that both $v_1$ and $v_2$ satisfy the odd condition.
    In each case, it is easy to verify that $\varphi_i$ is a proper 5-coloring of $G$ such that every vertex other than $v_i$ satisfies the odd condition.

    Finally, we consider the case where $B$ is a one-hat plus of $G$.
    Let $V(B)=\{v_1,v_2,v_3\}\cup\{u_i\mid 0\leq i\leq 3\}$ such that $N_G(u_0)=\{v_1,v_2,v_3,u_1,u_2,u_3\}$, $N_G(u_1)=\{v_1,v_2,u_0,u_3\}$, $N_G(u_2)=\{v_2,v_3,u_0\}$ and $N_G(u_3)=\{v_1,u_0,u_1\}$.
    We let $\varphi_1(u_0)=4$ and choose $\varphi_1(u_2)\in\{1,5\}$ so that $v_3$ satisfies the odd condition.
    By the argument in the last paragraph, we can choose $\varphi_1(u_1)$ and $\varphi_1(u_3)$ so that every vertex in $\{v_2,u_0,u_1,u_3\}$ satisfies the odd condition.
    It is easy to verify that every vertex of $V(G)\setminus\{v_1\}$ satisfies the odd condition with respect to $\varphi_1$.
    Similarly, we can extend $\varphi'$ to a coloring $\varphi_2$ such that every vertex of $V(G)\setminus\{v_2\}$ satisfies the odd condition.
    We extend $\varphi'$ to a coloring $\varphi_3$ of $G$ by letting $\varphi_3(u_0)=4$, $\varphi_3(u_2)=5$, and choosing $\varphi_3(u_1)$ and $\varphi_3(u_3)$ so that every vertex of $\{v_1,v_2,u_1,u_3\}$ satisfies the odd condition.
    Since $d_G(u_0)=6$ and distinct four colors appear in the neighbors of $u_0$, $u_0$ satisfies the odd condition with respect to $\varphi_3$.
    Thus, every vertex of $V(G)\setminus\{v_3\}$ satisfies the odd condition with respect to $\varphi_3$.
\end{proof}

\subsection{Proof of Theorem~\ref{thm:3tree}}

\setcounter{case}{0}

Let $G$ be a 3-tree of order $n$.
The proof goes by induction on $n$.
If $n\leq 5$, then trivially $G$ is odd $5$-colorable.
We assume that $n\geq 6$, and that every 3-tree of order less than $n$ is odd $5$-colorable.
We first consider the case where $G$ has a one-hat or a one-hat plus with a vertex of specified degree in its root.

\begin{case}\label{case:3tree good hat}
  Suppose that $G$ contains a branch $B=B(\{y_1,y_2,y_3\},x)$ which is either a one-hat or a one-hat plus of $G$.
  If $d_G(y_1)$ is equal to $4$ or an odd integer, then $G$ is odd $5$-colorable.
\end{case}

\begin{proof}
  Let $G'=G-V(B)\setminus\{y_1,y_2,y_3\}$.
  By the induction hypothesis, $G'$ admits an odd 5-coloring $\varphi'$.
  By using Lemma~\ref{lem:3tree nearodd} to $B$, we can extend $\varphi'$ to a proper 5-coloring of $G$ such that every vertex other than $y_1$ satisfies the odd condition.
  By Observations~\ref{ob:odd deg vtx} and \ref{ob:small deg vtx}, $y_1$ satisfies the odd condition, and hence we extend $\varphi'$ to an odd 5-coloring of $G$.
\end{proof}

By Lemma~\ref{lem:3tree unavoidable}, $G$ has a subgraph $H\in\mathcal{H}^{(3)}(G)\cup\mathcal{T}^{(3)}(G)$ with the root $\{v_1,v_2,v_3\}$ that is neither a one-hat nor a one-hat plus of $G$.
Note that $H\notin \mathcal{H}^{(3)}_{1,0,0}(G)$ since $H$ is not a one-hat of $G$.
Let $G'=G-(V(H)\setminus\{v_1,v_2,v_3\})$.
By the induction hypothesis, $G'$ admits an odd 5-coloring $\varphi$.
Without loss of generality, we may assume that $\varphi(v_i)=i$ for each $i\in \{1,2,3\}$.

\begin{case}\label{case:3tree h001}
  $H\in\mathcal{H}^{(3)}_{a,b,c}(G)$ for some tuple $(a,b,c)$ with $1\leq a+b+c\leq 3$ and $c\geq 1$.
\end{case}

\begin{proof}
  Let $H=B(\{v_1,v_2,v_3\},u_0)\in \mathcal{H}^{(3)}_{a,b,c}(G)$ and suppose that there is a vertex $u_1\in V(H)$ such that $B_1=B(\{v_1,v_2,u_0\},u_1)$ is a one-hat plus of $G$.
  Let $B_2=B(\{v_2,v_3,u_0\},x)$ if there is a vertex $x\in (N_G(v_2)\cap N_G(v_3)\cap N_G(u_0))\setminus \{v_1\}$ and let $B_2$ be a triangle with three vertices $\{v_2,v_3,u_0\}$ otherwise.
  Similarly, let $B_3=B(\{v_3,v_1,u_0\},x)$ if there is a vertex $x\in (N_G(v_3)\cap N_G(v_1)\cap N_G(u_0))\setminus \{v_2\}$ and let $B_3$ be a triangle with three vertices $\{v_3,v_1,u_0\}$ otherwise.
  By the assumption that $H\in \mathcal{H}^{(3)}_{a,b,c}(G)$, each of $B_2$, $B_3$ is either an ear, a one-hat, a one-hat plus or a triangle of $G$.
  We first show the following subclaim.

  \begin{subcase}\label{subcase:3tree}
    $d_{B_1}(u_0)=4$.
  \end{subcase}
  \begin{proof}
    By the symmetry of $v_1$ and $v_2$, we may assume that $V(B_1)=\{v_1,v_2,u_0,u_1,u_2,u_3,u_4\}$ such that $N_G(u_1)=\{v_1,v_2,u_0,u_2,u_3,u_4\}$, $N_G(u_2)=\{v_1,v_2,u_1,u_4\}$, $N_G(u_3)=\{v_1,u_0,u_1\}$, and $N_G(u_4)=\{v_2,u_1,u_2\}$.
    If both $B_2$ and $B_3$ are triangles of $G$, then we have $d_G(u_0)=5$, which falls into Case~\ref{case:3tree good hat}.
    Hence, we assume that at least one of $B_2$ and $B_3$ is not a triangle of $G$.

    Let $\varphi(u_0)=4$, $\varphi(u_1)=5$ and $\varphi(u_3)=3$.
    Suppose that $d_G(u_0)\leq 7$.
    Then, by using Lemma~\ref{lem:3tree nearodd}, we can choose colors for the vertices in $V(H)\setminus (V(B_1)\cup\{v_3\})$ so that every vertex of $V(H)\setminus V(B_1)$ satisfies the odd condition.
    Again, by Lemma~\ref{lem:3tree nearodd}, we choose $\varphi(u_2)$ and $\varphi(u_4)$ so that every vertex of $\{v_1,v_2,u_2,u_4\}$ satisfies the odd condition.
    By the choice of colors, it is easy to verify that every vertex other than $u_0$ and $u_1$ satisfies the odd condition.
    Furthermore, we have $d_G(u_0), d_G(u_1)\leq 7$ and four distinct colors appear in the neighborhood of each of $u_0$ and $u_1$.
    Thus, $\varphi$ is an odd 5-coloring of $G$.
    
    We assume that $d_G(u_0)\geq 8$, which implies that at least one of $B_1$ and $B_2$ is a one-hat or a one-hat plus of $G$. 
    By using Lemma~\ref{lem:3tree nearodd} to $B_2$ and $B_3$, we can choose colors for the vertices in $V(H)\setminus (V(B_1)\cup\{v_3\})$ so that every vertex of $V(H)\setminus (V(B_1)\setminus\{u_0\})$ satisfies the odd condition.
    By Lemma~\ref{lem:3tree nearodd}, we can choose $\varphi(u_2)$ and $\varphi(u_4)$ so that every vertex of $\{v_1,v_2,u_2,u_4\}$ satisfies the odd condition, and the resulting coloring $\varphi$ is an odd 5-coloring of $G$.
  \end{proof}

  Now we return to the proof of Case~\ref{case:3tree h001}.
  Let $V(B_1)=\{v_1,v_2,u_0,u_1,u_2,u_3,u_4\}$.
  By Subcase~\ref{subcase:3tree} and the symmetry of $v_1$ and $v_2$, we may assume one of the followings:
  \begin{enumerate}[label=(\alph*)]
    \item $N_G(u_1)=\{v_1,v_2,u_0,u_2,u_3,u_4\}$, $N_G(u_2)=\{v_1,u_0,u_1,u_4\}$, $N_G(u_3)=\{v_1,v_2,u_1\}$, and $N_G(u_4)=\{u_0,u_1,u_2\}$ (Figure \ref{fig:odd 3tree h001-1}), or 
    \item $N_G(u_1)=\{v_1,v_2,u_0,u_2,u_3,u_4\}$, $N_G(u_2)=\{v_1,u_0,u_1,u_4\}$, $N_G(u_3)=\{v_2,u_0,u_1\}$, and $N_G(u_4)=\{v_1,u_1,u_2\}$ (Figure \ref{fig:odd 3tree h001-2}).
  \end{enumerate}

  \begin{figure}
    \centering
    \begin{minipage}{0.45\columnwidth}
      \centering
      \begin{tikzpicture}[roundnode/.style={circle, draw=black,fill=white, minimum size=1.5mm, inner sep=0pt}]
        \node [roundnode] (v1) at (90:2.4){};
        \node [roundnode] (v2) at (210:2.4){};
        \node [roundnode] (v3) at (330:2.4){};
        \node [roundnode] (u0) at (330:0.8){};
        \node [roundnode] (u1) at (150:0.6){};
        \node [roundnode] (u2) at (90:1){};
        \node [roundnode] (u3) at (150:0.9){};
        \node [roundnode] (u4) at (90:0.3){};
  
        \foreach \x in {1,2,3} \draw (u0)--(v\x);
        \draw (v1)--(v2)--(v3)--(v1);
        \foreach \x in {1,2} \draw (u1)--(v\x);
        \foreach \x in {0,2,3,4} \draw (u1)--(u\x);
        \foreach \x in {1,2} \draw (u3)--(v\x);
        \foreach \x in {0,1,2} \draw (u4)--(u\x);
        \draw (u0)--(u2)--(v1);
        \draw (v1)--(90:2.7);
        \draw (v2)--(210:2.7);
        \draw (v3)--(330:2.7);
  
        \node at (97:2.4){$v_1$};
        \node at (217:2.4){$v_2$};
        \node at (323:2.4){$v_3$};
        \node at (305:1.1){$B_2$};
        \node at (355:1.1){$B_3$};
        \node at (300:0.4){$u_0$};
      \end{tikzpicture}
      \subcaption{}
      \label{fig:odd 3tree h001-1}
    \end{minipage}
    \begin{minipage}{0.45\columnwidth}
      \centering
      \begin{tikzpicture}[roundnode/.style={circle, draw=black,fill=white, minimum size=1.5mm, inner sep=0pt}]
        \node [roundnode] (v1) at (90:2.4){};
        \node [roundnode] (v2) at (210:2.4){};
        \node [roundnode] (v3) at (330:2.4){};
        \node [roundnode] (u0) at (330:0.8){};
        \node [roundnode] (u1) at (150:0.6){};
        \node [roundnode] (u2) at (30:0.3){};
        \node [roundnode] (u3) at (210:0.5){};
        \node [roundnode] (u4) at (100:0.7){};
  
        \foreach \x in {1,2,3} \draw (u0)--(v\x);
        \draw (v1)--(v2)--(v3)--(v1);
        \foreach \x in {1,2} \draw (u1)--(v\x);
        \foreach \x in {0,2,3,4} \draw (u1)--(u\x);
        \draw (v2)--(u3)--(u0);
        \draw (u0)--(u2)--(u4)--(v1)--(u2);
        \draw (v1)--(90:2.7);
        \draw (v2)--(210:2.7);
        \draw (v3)--(330:2.7);
        \draw[->] (30:1.5).. controls (30:1) ..(340:0.8);
  
        \node at (97:2.4){$v_1$};
        \node at (217:2.4){$v_2$};
        \node at (323:2.4){$v_3$};
        \node at (305:1.1){$B_2$};
        \node at (355:1.1){$B_3$};
        \node at (30:1.8){$u_0$};
      \end{tikzpicture}
      \subcaption{}
      \label{fig:odd 3tree h001-2}
    \end{minipage}
    \caption{Graphs in Case~\ref{case:3tree h001}}
  \end{figure}
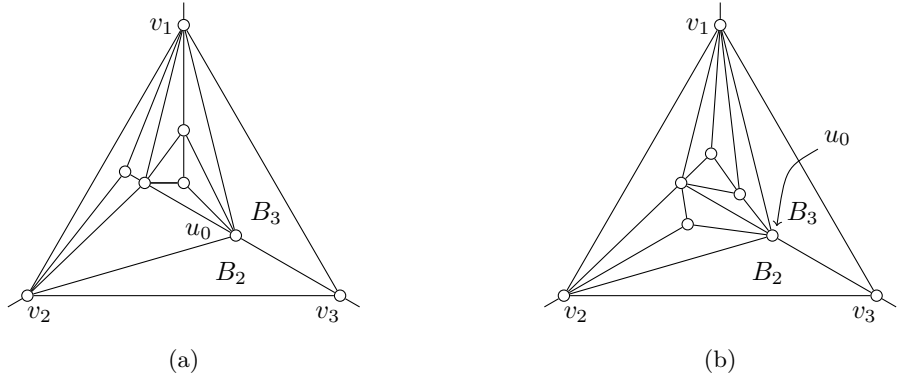

  We give a strategy for a coloring of $H$ that works for both of these two cases as follows.
  Suppose first that both $B_2$ and $B_3$ are triangles of $G$.
  We let $(\varphi(u_0),\varphi(u_1))$ be $(4,5)$ or $(5,4)$ so that $v_3$ satisfies the odd condition and choose $\varphi(u_3)$ so that $v_2$ satisfies the odd condition.
  By using Lemma~\ref{lem:3tree nearodd} to $B(\{v_1,u_0,u_1\},u_2)$, we choose $\varphi(u_2)$ and $\varphi(u_4)$ so that every vertex of $\{v_1,u_1,u_2,u_4\}$ satisfies the odd condition.
  It is easy to verify that every vertex other than $u_0$ satisfies the odd condition, and $u_0$ satisfies the odd condition since $d_G(u_0)=6$ and four colors $\{1,2,3,5\}$ appear in the neighborhood.
  Thus, $\varphi$ is an odd 5-coloring of $G$.
  
  Next, we assume that neither $B_2$ nor $B_3$ is a triangle of $G$.
  In this case, we let $\varphi(u_0)=4$, $\varphi(u_1)=5$, and $\varphi(u_3)=3$.
  By using Lemma~\ref{lem:3tree nearodd} to $B_2$, we can color $V(B_2)\setminus\{v_2,v_3,u_0\}$ so that every vertex of $V(B_2)\setminus\{v_3,u_0\}$ satisfies the odd condition.
  Then again by using Lemma~\ref{lem:3tree nearodd} to $B_3$, we can color $B_3$ so that every vertex of $V(B_3)\setminus\{v_1,u_0\}$ satisfies the odd condition.
  Finally, we color $B(\{v_1,u_0,u_1\},u_2)$ so that every vertex of $\{v_1,u_0,u_2,u_4\}$ satisfies the odd condition.
  It is easy to verify that every vertex other than $u_1$ satisfies the odd condition, and $u_1$ satisfies the odd condition since $d_G(u_1)=6$ and four colors $\{1,2,3,4\}$ appear in the neighborhood.
  Thus, $\varphi$ is an odd 5-coloring of $G$.

  Hence, we may assume that one of $B_2$ and $B_3$ is a triangle and the other is not.
  Then we have $d_G(u_0)\in \{7,8,9\}$, and we are done for the cases $d_G(u_0)\in \{7,9\}$ by Case~\ref{case:3tree good hat}.
  Thus, $d_G(u_0)=8$, which implies that either $d_{B_2}(u_0)=4$ and $B_3$ is a triangle, or $d_{B_3}(u_0)=4$ and $B_2$ is a triangle.
  If $d_{B_i}(u_0)=4$ and $B_i$ is a one-hat plus of $G$ for some $i\in\{2,3\}$, then it falls into Subclaim~\ref{subcase:3tree}.
  Hence, we may assume that one of $B_2$ and $B_3$ is a one-hat of $G$, and the other is a triangle.
  For each case, we extend $\varphi$ to $G$ as follows:
  \begin{itemize}
    \item When $B_2$ is a one-hat of $G$, we let $\varphi(u_0)=4$, $\varphi(u_1)=5$ and $\varphi(u_3)=3$.
    We use Lemma~\ref{lem:3tree nearodd} to $B_2$ to color $V(B_2)\setminus\{v_2,v_3,u_0\}$ so that every vertex of $V(B_2)\setminus\{u_0\}$ satisfies the odd condition and use Lemma~\ref{lem:3tree nearodd} to $B(\{v_1,u_0,u_1\},u_2)$ to choose $\varphi(u_2)$ and $\varphi(u_4)$ so that every vertex of $\{v_1,u_0,u_2,u_4\}$ satisfies the odd condition.
    \item When $B_3$ is a one-hat of $G$, we let $\varphi(u_0)=4$, $\varphi(u_1)=5$, $\varphi(u_2)=3$, and choose $\varphi(u_3)$ so that $v_2$ satisfies the odd condition.
    Let $x\in\{v_1,u_0\}$ so that $N_G(u_4)=\{u_0,u_1,x\}$.
    We use Lemma~\ref{lem:3tree nearodd} to $B_3$ to color $V(B_3)\setminus\{v_3,v_1,u_0\}$ so that every vertex of $V(B_3)\setminus\{x\}$ satisfies the odd condition, and choose $\varphi(u_4)\in\{1,2,4\}\setminus\{\varphi(x)\}$ so that $x$ satisfies the odd condition.
  \end{itemize}
  In both cases, every vertex other than $u_1$ satisfies the odd condition by the choice of colors, and $u_1$ satisfies the odd condition since $d_G(u_1)=6$ and four colors $\{1,2,3,4\}$ appear in the neighborhood.
  Thus, $\varphi$ is an odd 5-coloring of $G$.
  This completes the proof of Case~\ref{case:3tree h001}.
\end{proof}

\begin{case}\label{case:3tree h200}
  $H\in\mathcal{H}^{(3)}_{2,0,0}(G)$.
\end{case}

\begin{proof}
  Let $V(H)=\{v_1,v_2,v_3,u_0,u_1,u_2\}$ such that $N_G(u_0)=\{v_1,v_2,v_3,u_1,u_2\}$, $N_G(u_1)=\{v_1,v_2,u_0\}$, and $N_G(u_2)=\{v_2,v_3,u_0\}$.
  As $d_G(u_0)=5$, $u_0$ satisfies the odd condition with respect to any proper 5-coloring of $G$.
  If $|\varphi^{-1}(4)\cap N_{G'}(v_2)|$ is even, then we let $\varphi(u_0)=4$, choose $\varphi(u_1)\in \{3,5\}$ so that $v_1$ satisfies the odd condition, and choose $\varphi(u_2)\in \{1,5\}$ so that $v_3$ satisfies the odd condition.
  Every vertex other than $v_2$ satisfies the odd condition by the choice of colors, and $v_2$ satisfies the odd condition since $|\varphi^{-1}(4)\cap N_G(v_2)|=|\varphi^{-1}(4)\cap N_{G'}(v_2)|+1$ is odd.
  Thus, $\varphi$ is an odd 5-coloring of $G$.
  Hence, we may assume that $|\varphi^{-1}(4)\cap N_{G'}(v_2)|$ is odd.
  If $|\varphi^{-1}(4)\cap N_{G'}(v_1)|$ is even, then we let $\varphi(u_0)=4$, choose $\varphi(u_2)\in \{1,5\}$ so that $v_3$ satisfies the odd condition, and choose $\varphi(u_1)\in \{3,5\}$ so that $v_2$ satisfies the odd condition.
  Every vertex other than $v_1$ satisfies the odd condition by the choice of colors, and $v_1$ satisfies the odd condition since $|\varphi^{-1}(4)\cap N_G(v_1)|=|\varphi^{-1}(4)\cap N_{G'}(v_1)|+1$ is odd.
  Thus, $\varphi$ is an odd 5-coloring of $G$.
  Hence, we may assume that $|\varphi^{-1}(4)\cap N_{G'}(v_1)|$ is odd.
  By the symmetry of $v_1$ and $v_3$, we may also assume that $|\varphi^{-1}(4)\cap N_{G'}(v_3)|$ is odd.
  Then we let $\varphi(u_0)=5$, $\varphi(u_1)=3$ and $\varphi(u_2)=1$.
  Since $|\varphi^{-1}(4)\cap N_G(v_i)|=|\varphi^{-1}(4)\cap N_{G'}(v_i)|$ is odd for each $i\in \{1,2,3\}$, $\varphi$ is an odd 5-coloring of $G$.
\end{proof}

\begin{case}\label{case:3tree h300}
  $H\in\mathcal{H}^{(3)}_{3,0,0}(G)$.
\end{case}

\begin{proof}
  Let $V(H)=\{v_1,v_2,v_3\}\cup \{u_i\mid 0\leq i\leq 3\}$ such that $N_G(u_0)=\{v_1,v_2,v_3,u_1,u_2,u_3\}$, $N_G(u_1)=\{v_1,v_2,u_0\}$, $N_G(u_2)=\{v_2,v_3,u_0\}$, and $N_G(u_3)=\{v_3,v_1,u_0\}$.
  If $|\varphi^{-1}(4)\cap N_{G'}(v_1)|$ is even, then we let $\varphi(u_0)=4$, $\varphi(u_1)=5$, choose $\varphi(u_2)\in \{1,5\}$ so that $v_2$ satisfies the odd condition, and choose $\varphi(u_3)\in \{2,5\}$ so that $v_3$ satisfies the odd condition.
  Every vertex other than $\{v_1,u_0\}$ satisfies the odd condition by the choice of colors.
  The vertex $u_0$ satisfies the odd condition since $d_G(u_0)=6$ and four colors $\{1,2,3,4\}$ appear in the neighborhood, and $v_1$ satisfies the odd condition since $|\varphi^{-1}(4)\cap N_G(v_1)|=|\varphi^{-1}(4)\cap N_{G'}(v_1)|+1$ is odd.
  Thus, $\varphi$ is an odd 5-coloring of $G$.
  Hence, we may assume that $|\varphi^{-1}(4)\cap N_{G'}(v_1)|$ is odd.
  By the symmetry of $v_1$, $v_2$ and $v_3$, we may also assume that $|\varphi^{-1}(4)\cap N_{G'}(v_i)|$ for each $i\in\{2,3\}$.
  Then let $\varphi(u_0)=5$ and $\varphi(u_1)=\varphi(u_2)=\varphi(u_3)=4$.
  The vertex $u_0$ satisfies the odd condition since $|\varphi^{-1}(4)\cap N_G(u_0)|=3$.
  Furthermore, $|\varphi^{-1}(4)\cap N_G(v_i)|=|\varphi^{-1}(4)\cap N_{G'}(v_i)|+2$ is odd for each $i\in \{1,2,3\}$.
  Hence, $\varphi$ is an odd 5-coloring of $G$.
\end{proof}

\begin{case}\label{case:3tree h010}
  $H\in \mathcal{H}^{(3)}_{0,1,0}(G)$.
\end{case}

\begin{proof}
  Let $V(H)=\{v_1,v_2,v_3,u_0,u_1,u_2\}$ such that $B_1=B(\{v_1,v_2,u_0\},u_1)$ is a one-hat of $G$.
  Since $d_G(u_0)\in \{4,5\}$, we are done by Case~\ref{case:3tree good hat}.
\end{proof}

\begin{case}\label{case:3tree h110}
  $H\in \mathcal{H}^{(3)}_{1,1,0}(G)$ and $H$ is not a one-hat plus of $G$.
\end{case}

\begin{proof}
  Let $V(H)=\{v_1,v_2,v_3\}\cup\{u_i\mid 0\leq i\leq 3\}$ such that $B(\{v_1,v_2,u_0\},u_1)$ is a one-hat of $G$ and $B(\{v_2,v_3,u_0\},u_2)$ is an ear of $G$.
  Then we have $d_G(u_0)\in \{5,6\}$. 
  If $d_G(u_0)=5$, then we are done by Case~\ref{case:3tree good hat}.
  Hence, we assume that $d_G(u_0)=6$.
  Since $H$ is not a one-hat plus of $G$, we set $N_G(u_0)=\{v_1,v_2,v_3,u_1,u_2,u_3\}$, $N_G(u_1)=\{v_1,v_2,u_0,u_3\}$, $N_G(u_2)=\{v_2,v_3,u_0\}$ and $N_G(u_3)=\{v_2,u_0,u_1\}$ (Figure~\ref{fig:odd 3tree h110}).
  Let $\varphi(u_0)=4$.
  We choose $\varphi(u_2)\in \{1,5\}$ so that $v_3$ satisfies the odd condition, and we use Lemma~\ref{lem:3tree nearodd} to $B(\{v_1,v_2,u_0\},u_1)$ to color $\{u_1,u_3\}$ so that every vertex of $\{v_1,v_2,u_1,u_3\}$ satisfies the odd condition.
  Every vertex other than $u_0$ satisfies the odd condition by the choice of colors, and $u_0$ satisfies the odd condition since $\varphi^{-1}(2)\cap N_G(u_0)=\{v_2\}$.
  Thus, $\varphi$ is an odd 5-coloring of $G$.
\end{proof}

\begin{figure}
  \centering
    \begin{tikzpicture}[roundnode/.style={circle, draw=black,fill=white, minimum size=1.5mm, inner sep=0pt}]
      \node [roundnode] (v1) at (90:2.4){};
      \node [roundnode] (v2) at (210:2.4){};
      \node [roundnode] (v3) at (330:2.4){};
      \node [roundnode] (u0) at (0:0){};
      \node [roundnode] (u3) at (180:0.7){};
      \node [roundnode] (u2) at (270:0.6){};
      \node [roundnode] (u1) at (120:0.9){};

      \foreach \x in {1,2,3}{\draw (u0)--(v\x);
      \draw (u0)--(u\x);}
      \draw (v1)--(v2)--(v3)--(v1);
      \foreach \x in {1,2} \draw (u1)--(v\x);
      \foreach \x in {0,1} \draw (u3)--(u\x);
      \draw (u3)--(v2);
      \foreach \x in {2,3} \draw (u2)--(v\x);
      \draw (v1)--(90:2.7);
      \draw (v2)--(210:2.7);
      \draw (v3)--(330:2.7);

      \node at (97:2.4){$v_1$};
      \node at (217:2.4){$v_2$};
      \node at (323:2.4){$v_3$};
      \node at (30:0.3){$u_0$};
      \node at (103:0.9){$u_1$};
      \node at (152:0.45){$u_3$};
      \node at (270:0.9){$u_2$};
    \end{tikzpicture}
    \caption{A graph in Case~\ref{case:3tree h110}.}
    \label{fig:odd 3tree h110}
\end{figure}
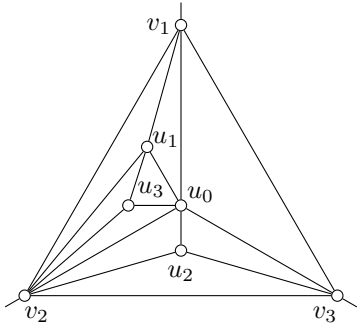

\begin{case}\label{case:3tree h210}
  $H\in \mathcal{H}^{(3)}_{2,1,0}(G)$.
\end{case}

\begin{proof}
  Let $V(H)=\{v_1,v_2,v_3\}\cup \{u_i\mid 0\leq i\leq 4\}$ such that $B_1=B(\{v_1,v_2,u_0\},u_1)$ is a one-hat of $G$ and both $B_2=B(\{v_2,v_3,u_0\},u_2)$ and $B_3=B(\{v_3,v_1,u_0\},u_3)$ are ears of $G$.
  We have $d_G(u_0)\in\{6,7\}$, and the case $d_G(u_1)=7$ falls into Case~\ref{case:3tree good hat}.
  Hence, we may assume that $d_G(u_0)=6$, which implies that $d_{B_1}(u_0)=3$.
  Then we set $N_G(u_0)=\{v_1,v_2,v_3,u_1,u_2,u_3\}$, $N_G(u_1)=\{v_1,v_2,u_0,u_4\}$, $N_G(u_2)=\{v_2,v_3,u_0\}$, $N_G(u_3)=\{v_3,v_1,u_0\}$, and $N_G(u_4)=\{v_1,v_2,u_1\}$.
  We let $\varphi(u_0)=4$, $\varphi(u_2)=5$, and choose $\varphi(u_3)\in \{2,5\}$ so that $v_3$ satisfies the odd condition.
  By using Lemma~\ref{lem:3tree nearodd} to $B_1$, we choose $\varphi(u_1)$ and $\varphi(u_4)$ so that every vertex of $\{v_1,v_2,u_1,u_4\}$ satisfies the odd condition.
  Every vertex other than $u_0$ satisfies the odd condition by the choice of colors, and $u_0$ satisfies the odd condition since $d_G(u_0)=6$ and four colors $\{1,2,3,5\}$ appear in the neighborhood.
  Thus, $\varphi$ is an odd 5-coloring of $G$.
\end{proof}

\begin{case}\label{case:3tree h020}
  $H\in \mathcal{H}^{(3)}_{0,2,0}(G)$.
\end{case}

\begin{proof}
  Let $V(H)=\{v_1,v_2,v_3\}\cup\{u_i\mid 0\leq i\leq 4\}$ such that $B_1=B(\{v_1,v_2,u_0\},u_1)$ and $B_2=B(\{v_2,v_3,u_0\},u_2)$ are both one-hats of $G$.
  We have $d_G(u_0)\in\{5,6,7\}$, and we are done for cases $d_G(u_0)\in \{5,7\}$ by Case~\ref{case:3tree good hat}.
  Suppose that $d_G(u_0)=6$. 
  Then by the symmetry of $v_1$ and $v_3$, we may assume that $V(B_1)=\{v_1,v_2,u_0,u_1,u_3\}$ such that $N_G(u_1)=\{v_1,v_2,u_0,u_3\}$ and $N_G(u_3)=\{v_1,v_2,u_1\}$.
  We let $\varphi(u_0)=4$, $\varphi(u_1)=5$, and choose $\varphi(u_3)\in \{3,4\}$ so that $v_1$ satisfies the odd condition.
  By using Lemma~\ref{lem:3tree nearodd} to $B_2$, we color $V(B_2)\setminus\{v_2,v_3,u_0\}$ so that every vertex of $V(B_2)\setminus\{u_0\}$ satisfies the odd condition.
  Every vertex other than $u_0$ satisfies the odd condition by the choice of colors, and $u_0$ satisfies the odd condition since $d_G(u_0)=6$ and four colors $\{1,2,3,5\}$ appear in the neighborhood.
  Thus, $\varphi$ is an odd 5-coloring of $G$.
\end{proof}

\begin{case}\label{case:3tree h120}
  $H\in \mathcal{H}^{(3)}_{1,2,0}(G)\cup \mathcal{H}^{(3)}_{0,3,0}(G)$.
\end{case}

\begin{proof}
  Let $V(H)=\{v_1,v_2,v_3\}\cup\{u_i\mid 0\leq i\leq t\}$ for some $t\in \{5,6\}$ such that both $B_1=B(\{v_1,v_2,u_0\},u_1)$ and $B_2=B(\{v_2,v_3,u_0\},u_2)$ are one-hats of $G$, and $B_3=B(\{v_3,v_1,u_0\},u_3)$ is either an ear or a one-hat of $G$.
  ($t=5$ when $B_3$ is an ear and $t=6$ when $B_3$ is a one-hat.)
  Let $V(B_1)=\{v_1,v_2,u_0,u_1,u_4\}$ and $V(B_2)=\{v_2,v_3,u_0,u_2,u_5\}$.
  Let $V(B_3)=\{v_3,v_1,u_0,u_3\}$ if $B_3$ is an ear, and let $V(B_3)=\{v_3,v_1,u_0,u_3,u_6\}$ if $B_3$ is a one-hat.
  We have $d_G(u_0)\in \{6,7,8\}$, and the case $d_G(u_0)=7$ is done in Case~\ref{case:3tree good hat}.

  Suppose first that $d_G(u_0)=6$. 
  Then we have $d_{B_3}(u_0)=3$.
  When $B_3$ is a one-hat, $N_G(u_6)=\{v_3,v_1,u_3\}$.
  Let $\varphi(u_0)=4$, $\varphi(u_3)=5$, and if $B_3$ is a one-hat, then let $\varphi(u_6)=2$.
  We use Lemma~\ref{lem:3tree nearodd} to $B_1$ to color $\{u_1,u_4\}$ so that every vertex of $V(B_1)\setminus\{u_0\}$ satisfies the odd condition, then we apply Lemma~\ref{lem:3tree nearodd} to $B_2$ to color $\{u_2,u_5\}$ so that every vertex of $V(B_2)\setminus\{u_0\}$ satisfies the odd condition.
  Every vertex other than $u_0$ satisfies the odd condition by the choice of colors, and $u_0$ satisfies the odd condition since $d_G(u_0)=6$ and four colors $\{1,2,3,5\}$ appear in the neighborhood.
  Thus, $\varphi$ is an odd 5-coloring of $G$.

  Suppose that $d_G(u_1)=8$.
  We may assume that $d_{B_1}(u_0)=d_{B_2}(u_0)=4$ and $d_{B_3}(u_0)=3$;
  If $B_3$ is an ear, then we have $d_{B_3}(u_0)=3$ and thus both $d_{B_1}(u_0)$ and $d_{B_2}(u_0)$ must be equal to $4$.
  Otherwise, two of $\{d_{B_1}(u_0), d_{B_2}(u_0), d_{B_3}(u_0)\}$ are equal to $4$ and the other is equal to $3$. 
  By the symmetry of $B_1$, $B_2$, $B_3$, we may assume that $d_{B_3}(u_0)=3$.
  Note that every vertex of $V(H)\setminus\{u_0, u_6\}$ is adjacent to $u_0$, and thus a color assigned to $u_0$ appears exactly once in $N_G(v_2)\cap V(H)$ for any proper coloring of $G$.

  We suppose that $|\varphi^{-1}(4)\cap N_{G'}(v_1)|$ is even.
  Then let $\varphi(u_0)=4$, $\varphi(u_3)=5$, and if $B_3$ is a one-hat, then let $\varphi(u_6)=2$.
  We use Lemma~\ref{lem:3tree nearodd} to $B_2$ to color $\{u_2,u_5\}$ so that every vertex of $\{v_2,v_3,u_2,u_5\}$ satisfies the odd condition, 
  and we use Lemma~\ref{lem:3tree nearodd} to $B_1$ to color $\{u_1,u_4\}$ so that every vertex of $\{v_2,u_0,u_1,u_4\}$ satisfies the odd condition.
  Every vertex other than $v_1$ satisfies the odd condition by the choice of colors, and $v_1$ satisfies the odd condition since $|\varphi^{-1}(4)\cap N_G(v_1)|=|\varphi^{-1}(4)\cap N_{G'}(v_1)|+1$ is odd.
  Thus, $\varphi$ is an odd 5-coloring of $G$.
  Hence, we may assume that $|\varphi^{-1}(4)\cap N_{G'}(v_1)|$ is odd.
  By the symmetry of $v_1$ and $v_3$, we may also assume that $|\varphi^{-1}(4)\cap N_{G'}(v_3)|$ is odd.
  
  Next, we suppose that $|\varphi^{-1}(4)\cap N_{G'}(v_2)|$ is even.
  Then let $\varphi(u_0)=4$, $\varphi(u_3)=5$, and if $B_3$ is a one-hat, then let $\varphi(u_6)=2$.
  We use Lemma~\ref{lem:3tree nearodd} to $B_1$ to color $\{u_1,u_4\}$ so that every vertex of $\{v_1,u_0,u_1,u_4\}$ satisfies the odd condition, and we use Lemma~\ref{lem:3tree nearodd} to $B_2$ to color $\{u_2,u_5\}$ so that every vertex of $\{v_3,u_0,u_2,u_5\}$ satisfies the odd condition.
  Every vertex other than $v_2$ satisfies the odd condition by the choice of colors, and $v_2$ satisfies the odd condition since $|\varphi^{-1}(4)\cap N_G(v_2)|=|\varphi^{-1}(4)\cap N_{G'}(v_2)|+1$ is odd.
  Thus, $\varphi$ is an odd 5-coloring of $G$.
  Hence, we may assume that $|\varphi^{-1}(4)\cap N_{G'}(v_2)|$ is odd.

  Now we let $\varphi(u_0)=5$ and $\varphi(u_1)=\varphi(u_2)=\varphi(u_3)=4$.
  We choose $\varphi(u_4)$, $\varphi(u_5)$, and $\varphi(u_6)$ arbitrarily so that the resulting coloring of $G$ is proper.
  For each $i\in \{1,2,3\}$, $|\varphi^{-1}(4)\cap N_G(v_i)|=|\varphi^{-1}(4)\cap N_{G'}(v_i)|+2$ is odd and thus $v_i$ satisfies the odd condition.
  Furthermore, since $|\varphi^{-1}(4)\cap N_G(u_0)|=3$, $u_0$ satisfies the odd conditions.
  Thus, $\varphi$ is an odd 5-coloring of $G$.
\end{proof}

\begin{case}\label{case:3tree t001}
  $H\in\mathcal{T}^{(3)}_{a,b,c}(G)$ for some tuple $(a,b,c)$ with $a+b+c=2$ and $c\geq 1$.
\end{case}

\begin{proof}
  Let $V(H)=V(B_u)\cup V(B_w)$ where $B_u=B(\{v_1,v_2,v_3\},u_0)$ is a one-hat plus of $G$ and $B_w=B(\{v_1,v_2,v_3\},w_0)$ is either an ear, a one-hat, or a one-hat plus of $G$.
  By the symmetry of $v_1$, $v_2$, and $v_3$, we may assume that $V(B_u)=\{v_1,v_2,v_3\}\cup\{u_i\mid 0\leq i\leq 3\}$ such that $N_G(u_0)=\{v_1,v_2,v_3,u_1,u_2,u_3\}$, $N_G(u_1)=\{v_1,v_2,u_0,u_3\}$, $N_G(u_2)=\{v_2,v_3,u_0\}$, and $N_G(u_3)=\{v_1,u_0,u_1\}$.
  We let $\varphi(u_0)=4$ and $\varphi(u_2)=5$.
  By using Lemma~\ref{lem:3tree nearodd} to $B_w$, we choose colors for $V(B_w)\setminus\{v_1,v_2,v_3\}$ so that every vertex of $V(B_w)\setminus\{v_1,v_2\}$ satisfies the odd condition.
  Then we use Lemma~\ref{lem:3tree nearodd} to a one-hat $B(\{v_1,v_2,u_0\},u_1)$ to color $u_1$ and $u_3$ so that every vertex of $\{v_1,v_2,u_1,u_3\}$ satisfies the odd condition.
  Every vertex other than $u_0$ satisfies the odd condition by the choice of colors, and $u_0$ satisfies the odd condition since $d_G(u_0)=6$ and four colors $\{1,2,3,5\}$ appear in the neighborhood.
  Thus, $\varphi$ is an odd 5-coloring of $G$.
\end{proof}

\begin{case}\label{case:3tree t200}
  $H\in \mathcal{T}^{(3)}_{2,0,0}(G)$.
\end{case}

\begin{proof}
  Let $V(H)=\{v_1,v_2,v_3,u_0,w_0\}$ such that $N_G(u_0)=N_G(w_0)=\{v_1,v_2,v_3\}$.
  We let $\varphi(u_0)=\varphi(w_0)=4$.
  Then, for every vertex $x$ of $G'$ and every color $i\in \{1,2,3,4,5\}$, $|\varphi^{-1}(i)\cap N_G(x)|$ and $|\varphi^{-1}(i)\cap N_{G'}(x)|$ have the same parity, and hence $\varphi$ is an odd 5-coloring of $G$.
\end{proof}

\begin{case}\label{case:3tree t110}
  $H\in \mathcal{T}^{(3)}_{1,1,0}(G)$. 
\end{case}

\begin{proof}
  Let $V(H)=V(B_u)\cup V(B_w)$ where $B_u=B(\{v_1,v_2,v_3\},u_0)$ is a one-hat of $G$ and $B_w=B(\{v_1,v_2,v_3\},w_0)$ is an ear of $G$.
  By the symmetry of $v_1$, $v_2$ and $v_3$, we may assume that $V(B_u)=\{v_1,v_2,v_3,u_0,u_1\}$ such that $N_G(u_0)=\{v_1,v_2,v_3,u_1\}$ and $N_G(u_1)=\{v_1,v_2,u_0\}$.
  We first choose $\varphi(u_1)\in \{3,4,5\}$ so that $|\varphi^{-1}(i_1)\cap (N_G(v_1)\setminus\{u_0,w_0\})|$ is odd for some color $i_1\in [5]$ and $|\varphi^{-1}(i_2)\cap (N_G(v_2)\setminus\{u_0,w_0\})|$ is odd for some color $i_2\in [5]$.
  We let $\varphi(u_0)=\varphi(w_0)=j$ where $j\in\{4,5\}\setminus\{\varphi(u_1)\}$ to obtain a coloring of $G$.
  For $i\in \{1,2\}$, the vertex $v_i$ satisfies the odd condition since $|\varphi^{-1}(i_1)\cap N_G(v_1)|$ is odd.
  Similarly, $v_3$ satisfies the odd condition since $|\varphi^{-1}(i)\cap N_G(v_3)|$ and $|\varphi^{-1}(i)\cap N_{G'}(v_3)|$ have the same parity for every color $i\in \{1,2,3,4,5\}$.
  Thus, $\varphi$ is an odd 5-coloring of $G$.
\end{proof}

\begin{case}\label{case:3tree t020}
  $H\in \mathcal{T}^{(3)}_{0,2,0}(G)$. 
\end{case}

\begin{proof}
  Let $V(H)=V(B_u)\cup V(B_w)$ where both $B_u=B(\{v_1,v_2,v_3\},u_0)$ and $B_w=B(\{v_1,v_2,v_3\},w_0)$ are one-hats of $G$.
  Let $V(B_u)=\{v_1,v_2,v_3,u_0,u_1\}$ and $V(B_w)=\{v_1,v_2,v_3,w_0,w_1\}$.
  By the symmetry of $v_1$, $v_2$ and $v_3$, we may assume that
  \begin{enumerate}[label=(\alph*)]
    \item\label{case:odd 3tree t020 1} $N_G(u_0)=\{v_1,v_2,v_3,u_1\}$, $N_G(u_1)=\{v_1,v_2,u_0\}$, $N_G(w_0)=\{v_1,v_2,v_3,w_1\}$, and $N_G(w_1)=\{v_1,v_2,w_0\}$, or
    \item\label{case:odd 3tree t020 2} $N_G(u_0)=\{v_1,v_2,v_3,u_1\}$, $N_G(u_1)=\{v_1,v_2,u_0\}$, $N_G(w_0)=\{v_1,v_2,v_3,w_1\}$, and $N_G(w_1)=\{v_2,v_3,w_0\}$.
  \end{enumerate}
  If $H$ satisfies \ref{case:odd 3tree t020 1}, then we let $\varphi(u_0)=\varphi(w_0)=4$ and $\varphi(u_1)=\varphi(w_1)=5$.
  Then, for every vertex $x$ of $G'$ and every color $i\in \{1,2,3,4,5\}$, $|\varphi^{-1}(i)\cap N_G(x)|$ and $|\varphi^{-1}(i)\cap N_{G'}(x)|$ have the same parity, which implies that $\varphi$ is an odd 5-coloring of $G$.
  Hence, we may assume that $G$ satisfies $H$ satisfies \ref{case:odd 3tree t020 2}.
  Let $\varphi(u_0)=4$ and choose $\varphi(w_0)\in\{4,5\}$ so that $|\varphi^{-1}(4)\cap (N_G(v_1)\setminus\{u_1\})|$ is odd.
  We choose $\varphi(w_1)\in\{1,4,5\}\setminus\{\varphi(w_0)\}$ so that $v_3$ satisfies the odd condition, and then choose $\varphi(u_1)\in \{3,5\}$ so that $v_2$ satisfies the odd condition.
  By the choice of colors, every vertex other than $v_1$ satisfies the odd condition.
  Since $\varphi(u_1)\neq 4$, $|\varphi^{-1}(4)\cap N_G(v_1)|=|\varphi^{-1}(4)\cap (N_G(v_1)\setminus\{u_1\})|$ is odd and $v_1$ satisfies the odd condition.
  Thus, $\varphi$ is an odd 5-coloring of $G$.
  This completes the proof of Theorem~\ref{thm:3tree}.
\end{proof}

\end{document}